\definecolor{uuuuuu}{rgb}{0.26666666666666666,0.26666666666666666,0.26666666666666666}
\definecolor{xdxdff}{rgb}{0.49019607843137253,0.49019607843137253,1.}
\definecolor{ffqqqq}{rgb}{1.,0.,0.}
\definecolor{uuuuuu}{rgb}{0.26666666666666666,0.26666666666666666,0.26666666666666666}
\definecolor{qqwuqq}{rgb}{0.,0.39215686274509803,0.}
\definecolor{zzttqq}{rgb}{0.6,0.2,0.}
\definecolor{xdxdff}{rgb}{0.49019607843137253,0.49019607843137253,1.}
\definecolor{qqqqff}{rgb}{0.,0.,1.}
\definecolor{cqcqcq}{rgb}{0.7529411764705882,0.7529411764705882,0.7529411764705882}
\definecolor{sqsqsq}{rgb}{0.12549019607843137,0.12549019607843137,0.12549019607843137}
\theoremstyle{plain}
\newtheorem{theorem}[subsection]{Theorem}
\newtheorem{lemma}[subsection]{Lemma}
\newtheorem{defi}[subsection]{Definition}
\newtheorem{prop}[subsection]{Proposition}
\newtheorem{cor}[subsection]{Corollary}
\theoremstyle{definition}
\newtheorem{note}[subsection]{Note}
\newcommand{\uu}{\cup}% union
\newcommand{\ii}{\cap}% intersection
\newcommand{\UU}{\bigcup}% big union
\newcommand{\II}{\bigcap}% big intersection
\newcommand{\ci}{\subseteq}% contained in with equality
\newcommand{\sci}{\subset}% strictly contained in
\newcommand{\es}{\emptyset}% the empty set
\newcommand{\set}[1]{\{#1\}}% set
\newcommand{\gd}{\delta}
\renewcommand{\gg}{\gamma}% old use >>
\newcommand{\gh}{\eta}
\newcommand{\gm}{\mu}
\newcommand{\go}{\omega}
\newcommand{\gs}{\sigma}
\newcommand{\gt}{\tau}
\newcommand{\gx}{\xi}
\newcommand{\gO}{\Omega}
\newcommand{\tbf}{\textbf}% text bold
\newcommand{\tit}{\textit}% text italic
\newcommand{\C}[1]{\mathcal{#1}}% Euler Script - only caps, use as \C{A}
\newcommand{\D}[1]{\mathbb{#1}}% Doubled - blackboard bold - only caps, uas as \D{A}
\newcommand{\te}{\text}% same as \mathrm command.
\newcommand{\ep}{\epsilon}
\newcommand{\ol}{\overline}
\newcommand{\ul}{\underline}
\newcommand{\vp}{\varphi}
\begin{document}
To appear, Southeast Asian Bulletin of Mathematics

\title{Topological pressure and fractal dimensions of cookie-cutter-like sets}

\author{Mrinal Kanti Roychowdhury}
\address{School of Mathematical and Statistical Sciences\\
University of Texas Rio Grande Valley\\
1201 West University Drive\\
Edinburg, TX 78539-2999, USA.}
\email{mrinal.roychowdhury@utrgv.edu}

\subjclass[2010]{28A80, 28A78.}
\keywords{Cookie-cutter-like set, topological pressure, Gibbs-like measure, Hausdorff measure, Hausdorff dimension, packing measure, packing dimension, box-counting dimension.}

\thanks{The research of the author was supported by U.S. National Security Agency (NSA) Grant H98230-14-1-0320}

\date{}
\maketitle

\pagestyle{myheadings}\markboth{Mrinal Kanti Roychowdhury}{Topological pressure and fractal dimensions of cookie-cutter-like sets}
\begin{abstract}
The cookie-cutter-like set is defined as the limit set of a sequence of classical cookie-cutter mappings. For this cookie-cutter set it is shown that the topological pressure function exists, and that the fractal dimensions such as the Hausdorff dimension, the packing dimension and the box-counting dimension are all equal to the unique zero $h$ of the pressure function. Moreover, it is shown that the $h$-dimensional Hausdorff measure and the $h$-dimensional packing measure are finite and positive.
\end{abstract}

\section{Introduction}
A basic task in Fractal Geometry is to determine or estimate the various dimensions of fractal sets. Fractal dimensions are introduced to measure the sizes of fractal sets and are used in many different disciplines. Many results on fractal dimensions and measures are obtained, among which the studies on self-similar sets are the most rich and thorough, for some details one can see \cite{B, H, M, MM, MU, R, RW, S, YT, YTXY}. In this paper, we have discussed about the topological pressure, fractal dimensions and measures of a typical fractal known as cookie-cutter-like set.
Let $J$ be a nonempty compact subset of $\D R$. For simplicity in our paper we have chosen $J=[0, 1]$. Let $J_1, J_2, \cdots, J_n$,  $n\geq 2$,  be a collection of nonempty closed subsets of $J$. Write \[X:=\UU_{j=1}^k J_j.\] Let $f : X \to J$ be such that each $J_j$ is mapped bijectively onto $J$. We assume that $f$ has a continuous derivative and is expanding so that $|f'(x)|>1$ for all $x\in X$. Let us write
\[E=\set{x \in J : f^k(x) \te{ is defined and in } X \te{ for all } k=0, 1, 2, \cdots},\]
where $f^k$ is the $k$th iterate of $f$. Thus, $E$ is the set of points that remain in $X$ under the iteration of $f$. Since $E=\uu_{k=0}^\infty f^{-k}(J)$ is the intersection of a decreasing sequence of compact sets, the set $E$ is compact and nonempty. $E$ is invariant under $f$, in that
\begin{equation} \label{eq111} f(E)=E=f^{-1}(E),\end{equation}
since $x\in E$ if and only if $f(x) \in E$. Moreover, $E$ is repeller, in the sense that the points not in $E$ are eventually mapped outside of $X$ under iteration by $f$. Let us now define $\vp_j:=(f|_{J_j})^{-1}$ for $1\leq j\leq n$. Then, $\vp_j : J \to J_j$ is a bijection. Since $f$ has a continuous derivative with $|f'(x)|>1$ on the compact set $X$, there are numbers $0<c_{\min}\leq c_{\max}<1$ such that $1<c_{\max}^{-1} \leq |f'(x)|\leq c_{\min}^{-1}<\infty$ for all $x\in X$. It follows that the inverse function $\vp_j$ are differentiable with $0<c_{\min}\leq |\vp_j'(x)|\leq c_{\max}<1$ for $x \in J$. By the mean value theorem, for $1\leq j\leq n$, we have
\[c_{\min} |x-y|\leq |\vp_j(x)-\vp_j(y)|\leq c_{\max}|x-y|\]
for $x, y\in J$. By \eqref{eq111} the repeller $E$ of $f$ satisfies
\[E=\UU_{j=1}^n \vp_j(E).\]
Since each $\vp_j$ is a contraction on $J$, using the fundamental IFS property (see \cite[Theorem 2.6]{F2}), the repeller $E$ of $f$ is the attractor or the invariant set of the set of contraction mappings $\set{\vp_1, \vp_2, \cdots, \vp_n}$. A dynamical system $f:J_1\uu J_2\uu\cdots \uu J_n \to J$ of this form is called a cookie-cutter mapping,  the equivalent iterated function system $\set{\vp_1, \vp_2, \cdots, \vp_n}$ on $J$ is termed a cookie-cutter system and the set $E$ is called a cookie-cutter set. In general, the mappings $\vp_1, \vp_2, \cdots, \vp_n$ are not similarity transformations and $E$ is a `distorted' Cantor set, which nevertheless is `approximately self-similar'. For details about it one could see \cite{F2, B}.  In this paper, we have considered a sequence $\set{f_k}_{k=1}^\infty$ of cookie-cutter mappings, and call the corresponding repeller $E$ the cookie-cutter-like set.

We denote by $\C H^s(E)$, $\C P^s(E)$, $\te{dim}_{\te{H}} E$, $\te{dim}_{\te{P}} E$, $\ul{\te{dim}}_{\te{B}} E$ and $\ol{\te{dim}}_{\te{B}} E$
 the $s$-dimensional Hausdorff measure, the $s$-dimensional packing measure, the Hausdorff dimension, the packing dimension, the lower box-counting and the upper box-counting dimensions of the set $E$, respectively. In this paper, for the cookie-cutter-like set $E$ we have defined the topological pressure function $P(t)$, and showed that there exists a unique $h\in (0, 1)$ such that $P(h)=0$, and then we have defined the Gibbs-like measure $\mu_h$. Using the consequence of the topological pressure and the Gibbs-like measure, we have shown that
 \[\te{dim}_{\te{H}}(E)=\te{dim}_{\te{P}}(E)=\ul{\te{dim}}_{\te{B}}(E)=\ol{\te{dim}}_{\te{B}}(E)=h, \te{ and } 0<\C H^h(E)\leq \C P^h(E) <\infty.\]
The result in this paper is a nonlinear extension, as well as a generalization of the classical result about self-similar sets given by the following theorem (see \cite{H}):

\tbf{Theorem:}  Let $E$ be the self-similar set of the family of similarity contractions $\set{S_1, S_2, \cdots, \\ S_N}$, where $S_j$ has the similarity ratio $c_j$. If the open set condition is satisfied, then
\[\te{dim}_{\te{H}}(E)=\te{dim}_{\te{P}}(E)=\ul{\te{dim}}_{\te{B}}(E)=\ol{\te{dim}}_{\te{B}}(E)=s, \te{ and } 0<\C H^s(E)\leq \C P^s(E) <\infty,\]
where $s$ is the unique positive solution of the equation $\sum_{j=1}^N c_j^s=1$.

\section{Basic results, cookie-cutter-like sets and the topological pressure}
In this section, first we adopt the following definitions and notations, which can be found in \cite{F1, F2}.
Let $E$ be a nonempty bounded subset of $\D R^n$ where $n\geq 1$, and $s\geq 0$.
Then, there are some basic inequalities between the dimensions:
\begin{equation} \label{eq0} \te{dim}_{\te{H}} E \leq \te{dim}_{\te{P}} E \leq \ol{\te{dim}}_{\te{B}} E, \te{ and } \te{dim}_{\te{H}} E \leq \ul{\te{dim}}_{\te{B}} E \leq \ol{\te{dim}}_{\te{B}} E.\end{equation}
Moreover, it is well-known that $\C H^s(E)\leq \C P^s(E)$.
Let $\C U=\set{U_i}$ be a countable collection of sets of $\D R^n$. We define
\[\|\C U\|^s : =\sum_{U_i\in \C U} |U_i|^s,\]
where $|A|$ denotes the diameter of a set $A$. We need the following proposition.
\begin{prop} (see \cite[Proposition~2.2]{F2}) \label{prop0} Let $E\sci \D R^n$ be a Borel set,  $\mu$ be a finite Borel measure on $\D R^n$, and $0<c<\infty$.

$(a)$ If $\limsup_{r\to 0} \mu(B(x, r))/r^s\leq c$ for all $x\in E$, then $\C H^s(E) \geq \mu(E)/c$.

$(b)$ If $\liminf_{r\to 0} \mu(B(x, r))/r^s\geq c$ for all $x\in E$, then $\C P^s(E) \leq 2^s\mu(E)/c$.

\end{prop}

\subsection{Cookie-cutter-like set} A mapping $f$ is called a cookie-cutter, if there  exists a finite collection of disjoint closed intervals $J_1, J_2, \cdots, J_q \sci J =[0, 1]$, such that

$(C1)$ $f$ is defined in a neighborhood of each $J_j$, $1\leq j\leq q$, the restriction of $f$ to each initial interval $J_j$ is 1-1 and onto, the corresponding
branch inverse is denoted by $\vp_j:=(f|_{J_j})^{-1} : J \to J_j$;

$(C2)$ $f$ is differentiable with H\"older continuous derivative $f'$, i.e., there exist constants $c_f>0$ and $\gg_f \in (0, 1]$ such that for $x, y\in J_j$,
$1\leq j\leq q$,
\[\left|f'(x)-f'(y)\right|\leq c_f|x-y|^{\gg_f};\]

$(C3)$ $f$ is boundedly expanding in the sense that there exist constants $b_f$ and $B_f$ such that
\[1<b_f:=\inf_x\set{|f'(x)|} \leq \sup_x\set{|f'(x)| }:=B_f<+\infty.\]
$[\UU_{j=1}^q J_j; c_f, \gg_f, b_f, B_f]$ is called the defining data of the cookie-cutter mapping $f$.

Let $q\geq 2$ be fixed, and consider a sequence of cookie-cutter mappings $\set{f_k}_{k\geq 1}$ with defining data $[\UU_{j=1}^{q}  J_{k, j}; c_k, \gg_k, b_k, B_k].$ Let us write $\vp_{k,j}:=\left(f_k|_{J_{k,j}}\right)^{-1}$ to denote the corresponding branch inverse of $f_k$, where $k\geq 1$ and $1\leq j\leq q$. We always assume that
\[1<\inf \set{b_k}\leq  \sup  \set{B_k} <\infty,  \ 0<\inf\set{\gg_k } \leq \sup\set{\gg_k} \leq 1, \te{ and } 0<\inf \set{c_k} \leq \sup \set{c_k}<\infty.\]
Let $\gO_0$ be the empty set. For $n\geq 1$, define
\[\gO_{n}=\set{1, 2, \cdots, q}^n, \ \gO_\infty=\lim_{n\to \infty} \gO_n, \te{ and } \gO=\UU_{k=0}^\infty \gO_k.\]
Elements of $\gO$ are called words. For any $\gs \in \gO$ if $\gs=(\gs_1, \gs_2, \cdots, \gs_n)  \in \gO_n$, we write $\gs^-=(\gs_1, \gs_2, \cdots, \gs_{n-1})$ to denote the word obtained by deleting the last letter of $\gs$,  $|\gs|=n$ to denote the length of $\gs$, and $\gs|_k:=(\gs_1, \gs_2, \cdots, \gs_k)$, $k\leq n$, to denote the truncation of $\gs$ to the length $k$.  For any two words  $\gs=(\gs_1, \gs_2, \cdots, \gs_k)$ and $\gt=(\gt_1, \gt_2, \cdots, \gt_m)$, we write $\gs\gt=\gs\ast \gt=(\gs_1,  \cdots, \gs_k, \gt_1,  \cdots, \gt_m)$ to denote the juxtaposition of $\gs, \gt \in \gO$.  A word of length zero is called the empty word and is denoted by $\es$. For $\gs \in \gO$ and $\gt\in \gO\uu \gO_\infty$ we say $\gt$ is an extension of $\gs$, written as $\gs\prec \gt$, if $\gt|_{|\gs|}=\gs$. For $\gs=(\gs_1, \gs_2, \cdots, \gs_n) \in \gO_n$, let us write $\vp_\gs=\vp_{1,\gs_1} \circ \cdots \circ\vp_{n,\gs_n}$, and define the rank-$n$ basic interval  corresponding to $\gs$ by
\[J_\gs=J_{(\gs_1, \gs_2, \cdots, \gs_n)}=\vp_{\gs}(J),\]
where $1\leq \gs_k \leq q$, $1\leq k\leq n$. It is easy to see that the set of basic intervals $\set{J_\gs : \gs \in \gO}$ has the following net properties:

$(i)$ $J_{\gs \ast j} \sci J_\gs$ for each $\gs \in \gO_n$ and $1\leq j\leq q$ for all $n\geq 1$;

$(ii)$ $J_{\gs} \II J_{\gt}=\es$, if $\gs, \gt \in \gO_n$ for all $n\geq 1$ and $\gs \neq \gt$.

Let $b=\inf\set{b_k}$ and $B=\sup\set{B_k}$. Then, by our assumption, $1<b\leq B<\infty$. Moreover, by $(C2)$, as $\vp_{k,j}$ is a corresponding branch inverse of $f_k$, where $k\geq 1$ and $1\leq j\leq q$, for all $x \in J$, we have $\left|f_k'(\vp_{k,j}(x))\right|\left|\vp_{k,j}'(x)\right|=1$, and so
\begin{equation} \label{eq1} B^{-1} \leq \left|\vp_{k,j}'(x)\right|\leq b^{-1}. \end{equation}
Next, let
\[E =\II_{n=1}^\infty \UU_{\gs \in \gO_n} J_\gs.\]
Choose $x, y$ to be the end points of $J$, and then $\vp_\gs(x), \vp_\gs(y)$ are the end points of $J_\gs$ for each $\gs \in \gO$, and so by mean value theorem, \[|J_\gs|=|\vp_\gs(x)-\vp_\gs(y)|=|\vp_\gs'(w)||x-y|=|\vp_\gs'(w)|,\]
 for some $w\in J_\gs$. Thus, $B^{-n} \leq |J_\gs| \leq b^{-n}$ for any $\gs \in \gO_n$, and so the diameter $\left|J_\gs\right| \to 0$ as $|\gs|\to \infty$. Hence, with the net properties we conclude that $E$ is a perfect, nowhere dense and totally disconnected subset of $J$. The set $E$ is called the \tit{cookie-cutter-like} (CC-like) set generated by the cookie-cutter sequence $\set{f_k}_{k=1}^\infty$.

\begin{defi} Let $E$ be a CC-like set and $r>0$. The family of basic intervals $\C U_r=\set{J_\gs : |J_\gs|\leq r<|J_{\gs^-}|}\sci \set{J_\gs : \gs \in \gO}$ is called the $r$-Moran covering of $E$ provided it is a covering of $E$, i.e., $E\ci \UU_{J_\gs \in \C U_r}J_\gs$.
\end{defi}
From the definition it follows that the elements of a Moran covering are disjoint, have almost equal sizes, and are often of different ranks.

The following lemma is known.

\begin{lemma} (see \cite[Lemma 2.1]{MRW}) \label{lemma1}
There exists a constant $1<\gx<+\infty$ such that for each $n\geq 1$,  $\gs \in \gO_n$, and $x, y \in J_\gs$, we have
\[\gx^{-1} \leq \frac{|F_n'(x)|}{|F_n'(y)|} \leq \gx,\]
where $F_n(x)=f_n\circ f_{n-1}\circ \cdots \circ f_1(x)$.
\end{lemma}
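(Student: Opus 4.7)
The plan is to expand $F_n'$ via the chain rule and compare the two products pointwise, taking advantage of Hölder continuity of each $f_k'$ and the exponential contraction of the remaining branch inverses.

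First, note that if $\gs=(\gs_1,\dots,\gs_n)\in\gO_n$ and $x\in J_\gs=\vp_{1,\gs_1}\circ\cdots\circ\vp_{n,\gs_n}(J)$, then $F_{k-1}(x)$ lies in $\vp_{k,\gs_k}\circ\cdots\circ\vp_{n,\gs_n}(J)\ci J_{k,\gs_k}$ for every $1\leq k\leq n$, so in particular $F_{k-1}(x)$ belongs to the interval on which $f_k$ is differentiable with Hölder derivative. The chain rule therefore gives
\[
\frac{|F_n'(x)|}{|F_n'(y)|}=\prod_{k=1}^n\frac{|f_k'(F_{k-1}(x))|}{|f_k'(F_{k-1}(y))|},
\]
so it suffices to bound $\sum_{k=1}^n\bigl|\log|f_k'(F_{k-1}(x))|-\log|f_k'(F_{k-1}(y))|\bigr|$ uniformly in $n$ and $\gs$.

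For each $k$, the inequality $|\log a-\log b|\leq |a-b|/\min(a,b)$ combined with $|f_k'|\geq b_k\geq b>1$ reduces a single term to $\leq b^{-1}|f_k'(F_{k-1}(x))-f_k'(F_{k-1}(y))|$. Hölder continuity $(C2)$ then bounds this by $b^{-1}c_k|F_{k-1}(x)-F_{k-1}(y)|^{\gg_k}$. The key geometric input is that $F_{k-1}(x)$ and $F_{k-1}(y)$ both lie in $\vp_{k,\gs_k}\circ\cdots\circ\vp_{n,\gs_n}(J)$, whose diameter, by the estimate \eqref{eq1} applied $n-k+1$ times, is at most $b^{-(n-k+1)}$. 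Writing $\gg:=\inf\set{\gg_k}>0$ and $c:=\sup\set{c_k}<\infty$, and noting that $b^{-(n-k+1)}<1$ forces $b^{-\gg_k(n-k+1)}\leq b^{-\gg(n-k+1)}$, we obtain
\[
\sum_{k=1}^n\left|\log\frac{|f_k'(F_{k-1}(x))|}{|f_k'(F_{k-1}(y))|}\right|\leq \frac{c}{b}\sum_{k=1}^n b^{-\gg(n-k+1)}\leq \frac{c}{b}\cdot\frac{b^{-\gg}}{1-b^{-\gg}}=:M.
\]
Exponentiating gives the lemma with $\gx=e^M$.

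The only nontrivial point is making sure the uniform constants $b,B,c,\gg$ dominate the $k$-dependent ones in the right direction, which is exactly what the standing hypotheses on the sequences $\set{b_k},\set{B_k},\set{c_k},\set{\gg_k}$ are designed to guarantee; once that is checked, the argument is a standard geometric-series distortion estimate of the type familiar from (time-homogeneous) cookie-cutter theory. The main obstacle, if any, is the indexing bookkeeping: verifying that $F_{k-1}(x)\in J_{k,\gs_k}$ rather than some other interval, so that $(C2)$ with the $k$th constants $c_k,\gg_k$ genuinely applies.
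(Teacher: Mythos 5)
Your argument is correct and is essentially the paper's own proof: chain-rule decomposition of $\log|F_n'|$, the contraction estimate $|F_{k-1}(x)-F_{k-1}(y)|\leq b^{-(n-k+1)}$, H\"older continuity of $f_k'$, a geometric series, and exponentiation. The only differences are cosmetic — you pick up an extra factor $b^{-1}$ from $|\log a-\log a'|\leq|a-a'|/\min(a,a')$ where the paper just uses $|f_k'|>1$, and you spell out the passage to the uniform constants $c=\sup c_k$, $\gg=\inf\gg_k$ (using $b^{-(n-k+1)}<1$), which the paper leaves implicit.
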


The following lemma follows from Lemma~2.2 in \cite{MRW}.

\begin{lemma} \label{lemma1111} Let $\gx$ be the constant of Lemma~\ref{lemma1}, and $n, k \in \D N$. Then, for $\go_1, \go_2 \in \gO_n$ and $\gs \in \gO_k$, we have
\[\gx^{-2}\frac{|J_{\go_2\ast\gs}|}{|J_{\go_2}|}\leq \frac{|J_{\go_1\ast\gs}|}{|J_{\go_1}|}\leq \gx^{2}\frac{|J_{\go_2\ast\gs}|}{|J_{\go_2}|}.\]

\end{lemma}

Let us now prove the following proposition.
\begin{prop} \label{prop1}
Let $\gs\in\gO_n$,  $n\geq 1$, $x, y \in J$ and $\gx$ be the constant of Lemma~\ref{lemma1}. Then,
\[\gx^{-1} |\vp_\gs'(y)| \leq |\vp_\gs'(x)| \leq \gx |\vp_\gs'(y)|.\]
\end{prop}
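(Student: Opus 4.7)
The plan is to reduce the statement about $|\varphi_\sigma'|$ on $J$ to the bounded variation principle (Lemma~\ref{lemma1}) applied to $|F_n'|$ on $J_\sigma$, using that $\varphi_\sigma$ and $F_n$ are inverse to each other on the appropriate domains.

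First I would recall that, by the very definition of $\varphi_\sigma$, the composition $\varphi_\sigma = \varphi_{1,\sigma_1}\circ\cdots\circ\varphi_{n,\sigma_n}$ is the inverse of $F_n = f_n\circ f_{n-1}\circ\cdots\circ f_1$ restricted to the rank-$n$ basic interval $J_\sigma$; that is, $\varphi_\sigma:J\to J_\sigma$ is a diffeomorphism and $F_n\circ\varphi_\sigma = \te{id}_J$. Differentiating this identity at an arbitrary point $x\in J$ and using the chain rule gives
\[
|F_n'(\varphi_\sigma(x))|\cdot |\varphi_\sigma'(x)| = 1,
\]
so that $|\varphi_\sigma'(x)| = 1/|F_n'(\varphi_\sigma(x))|$, and similarly $|\varphi_\sigma'(y)| = 1/|F_n'(\varphi_\sigma(y))|$.

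Next, since both $\varphi_\sigma(x)$ and $\varphi_\sigma(y)$ lie in $J_\sigma$, I can apply Lemma~\ref{lemma1} to the two points $\varphi_\sigma(x),\varphi_\sigma(y)\in J_\sigma$ to obtain
\[
\gx^{-1} \leq \frac{|F_n'(\varphi_\sigma(x))|}{|F_n'(\varphi_\sigma(y))|} \leq \gx.
\]
Taking reciprocals and substituting the identity from the previous step converts this chain into
\[
\gx^{-1} \leq \frac{|\varphi_\sigma'(x)|}{|\varphi_\sigma'(y)|} \leq \gx,
\]
which is exactly the claimed inequality after clearing denominators.

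There is no real obstacle here; the entire content of the proposition is repackaging bounded variation for $F_n'$ on $J_\sigma$ into a bounded distortion statement for $\varphi_\sigma'$ on $J$ via the inverse function relation. The only points that require care are (i) confirming that $\varphi_\sigma$ genuinely equals $F_n^{-1}|_{J_\sigma}$ (which follows by an easy induction from the factor-wise relations $\varphi_{k,\sigma_k} = (f_k|_{J_{k,\sigma_k}})^{-1}$) and (ii) noting that $|F_n'|$ never vanishes because each $|f_k'|>1$, so the reciprocals above are well defined.
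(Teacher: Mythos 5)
Your argument is correct and is essentially the paper's own proof: both differentiate the identity $F_n(\vp_\gs(x))=x$ to get $|F_n'(\vp_\gs(x))|\cdot|\vp_\gs'(x)|=1$ and then apply the bounded variation principle (Lemma~\ref{lemma1}) to the points $\vp_\gs(x),\vp_\gs(y)\in J_\gs$. Your added remarks on the inverse relation and nonvanishing of $|F_n'|$ are just explicit versions of steps the paper leaves implicit.
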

\begin{proof}
For $\gs\in \gO_n$, $n\geq 1$, and $x \in J$, we know $F_n(\vp_\gs(x))=x$. Thus,
\[|F_n'(\vp_\gs(x))|\cdot|\vp_\gs'(x)|=1 ,\te{ and so } |F_n'(\vp_\gs(x))|=\frac 1{|\vp_\gs'(x)|}.\]
Again for all $x \in J$, $\vp_\gs(x) \in J_\gs$. Hence, Lemma~\ref{lemma1} yields
\[\gx^{-1} |\vp_\gs'(y)| \leq |\vp_\gs'(x)| \leq \gx |\vp_\gs'(y)|,\]
and thus, the proposition is obtained.
\end{proof}
For any $\gs \in \gO$, let us write $\|\vp_\gs\|=\sup_{x\in J} |\vp_\gs(x)|$. From the above proposition the following lemma easily follows.
\begin{lemma}\label{lemma3}
Let $\gs, \gt \in \gO$. Then,
\[\gx^{-1} \|\vp_\gs'\|\|\vp_{\gt}'\|\leq \|\vp_{\gs\gt}'\|\leq \|\vp_\gs'\|\|\vp_{\gt}'\|. \]

\end{lemma}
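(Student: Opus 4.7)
The statement is essentially an immediate consequence of the bounded distortion estimate of Proposition \ref{prop1}, so the approach is to expand $\vp_{\gs\gt}'$ by the chain rule and then sandwich the inner factor $\vp_\gs'$ between $\|\vp_\gs'\|$ and $\gx^{-1}\|\vp_\gs'\|$.

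For $\gs \in \gO_n$ and $\gt \in \gO_m$, the composition $\vp_{\gs\gt}$ factors as $\vp_\gs \circ \vp_\gt$, so the chain rule gives $\vp_{\gs\gt}'(x) = \vp_\gs'(\vp_\gt(x)) \cdot \vp_\gt'(x)$ at every $x \in J$. The upper bound then falls out of the trivial pointwise estimates $|\vp_\gs'(\vp_\gt(x))| \leq \|\vp_\gs'\|$ (since $\vp_\gt(x) \in J$) and $|\vp_\gt'(x)| \leq \|\vp_\gt'\|$, after taking $\sup_{x \in J}$. For the lower bound, I would first derive from Proposition \ref{prop1}, by taking the supremum of its left-hand inequality over $y \in J$, the uniform estimate $|\vp_\gs'(z)| \geq \gx^{-1}\|\vp_\gs'\|$ valid for every $z \in J$; specializing $z = \vp_\gt(x)$, multiplying by $|\vp_\gt'(x)|$, and taking $\sup_{x \in J}$ then yields $\|\vp_{\gs\gt}'\| \geq \gx^{-1}\|\vp_\gs'\|\|\vp_\gt'\|$.

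I do not anticipate any genuine obstacle here; the only point deserving a brief internal remark is that the ``$\gt$-factor'' of $\vp_{\gs\gt}$ involves the shifted branch inverses of $f_{n+1},\dots,f_{n+m}$ rather than of $f_1,\dots,f_m$, but Lemma \ref{lemma1} and Proposition \ref{prop1} never use that the underlying block of cookie-cutters begins at index $1$, so the same constant $\gx$ governs both $\vp_\gs$ and the shifted $\gt$-composition, and the two inequalities combine to give the claim.
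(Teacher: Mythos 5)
Your chain-rule strategy is exactly what the paper intends (it offers no written proof of this lemma, asserting only that it ``easily follows'' from Proposition~\ref{prop1}), and two of your three steps are sound: the pointwise upper estimate, and the derivation from Proposition~\ref{prop1} of the uniform bound $|\vp_\gs'(z)|\geq \gx^{-1}\|\vp_\gs'\|$ for all $z\in J$. The genuine gap is precisely at the point you wave away in your closing remark. With $|\gs|=n$, $|\gt|=m$, the paper's definitions give the factorization $\vp_{\gs\gt}=\vp_\gs\circ\widetilde{\vp}_\gt$ where $\widetilde{\vp}_\gt=\vp_{n+1,\gt_1}\circ\cdots\circ\vp_{n+m,\gt_m}$, so your argument yields $\gx^{-1}\|\vp_\gs'\|\,\|\widetilde{\vp}_\gt'\|\leq\|\vp_{\gs\gt}'\|\leq\|\vp_\gs'\|\,\|\widetilde{\vp}_\gt'\|$. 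Your observation that Lemma~\ref{lemma1} and Proposition~\ref{prop1} do not care where a block of cookie-cutters starts is true but beside the point: the lemma's right-hand factor is $\|\vp_\gt'\|$ with $\vp_\gt=\vp_{1,\gt_1}\circ\cdots\circ\vp_{m,\gt_m}$, a composition of entirely different maps than $\widetilde{\vp}_\gt$, and nothing in the bounded-variation machinery compares $\|\widetilde{\vp}_\gt'\|$ with $\|\vp_\gt'\|$; the constant $\gx$ controls the ratio of derivative values of one fixed composition at two points, not the ratio of derivatives of two different compositions.

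Moreover this step cannot be repaired in general, so ``the two inequalities combine to give the claim'' is not merely unproved but false as stated for genuinely non-autonomous sequences: take $N=2$ and piecewise-linear cookie-cutters in which the inverse branches of $f_1$ have slope $1/3$ and those of $f_2$ have slope $1/4$; for $\gs=\gt=(1)$ one gets $\|\vp_{\gs\gt}'\|=1/12$ while $\|\vp_\gs'\|\,\|\vp_\gt'\|=1/9$, and since the H\"older constants $c_k$ may be taken arbitrarily small, $\gx$ can be made arbitrarily close to $1$, so the lower inequality fails (interchanging $f_1$ and $f_2$ makes the upper one fail). What your argument actually establishes is the shifted version displayed above, which coincides with the stated lemma only when the $f_k$ are all equal (the classical cookie-cutter case) or when $\vp_\gt$ is reinterpreted as the shifted composition. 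To be fair, this elision is inherited from the paper, which likewise passes from Proposition~\ref{prop1} to the lemma without addressing the shift; but as a proof of the displayed statement your final step does not go through, and it matters downstream, since the submultiplicativity $Z_{n+p}(t)\leq Z_n(t)Z_p(t)$ invoked for the pressure function uses the unshifted form.
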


\subsection{Topological pressure}For $t\in \D R$ and  $n\geq 1$, let us write $Z_n(t)=\sum_{\gs \in \gO_n} \|\vp_\gs'\|^t$. Then, for $n, p\geq 1$,
\[Z_{n+p}(t)=\sum_{\gs\in \gO_n} \sum_{\gt\in \in \gO_p}\|\vp_{\gs\gt}'\|^t.\]
By Lemma~\ref{lemma3}, if $t\geq 0$, then
\[ Z_{n+p}(t) \leq Z_n(t)Z_p(t),\]
and if $t<0$, then
\[ Z_{n+p}(t) \leq \gx^{-t}Z_n(t)Z_p(t).\]
Hence, by the standard theory of subadditive sequences, $\lim_{k\to\infty} \frac 1 k \log Z_k(t)$ exists (see \cite[Corollary 1.2]{F2}). Let us denote it by $P(t)$, i.e.,
\begin{equation} \label{eq2} P(t)=\lim_{k\to\infty} \frac 1 k \log \sum_{\gs\in \gO_k}\|\vp_\gs'\|^t.\end{equation}
The above function $P(t)$ is called the \tit{topological pressure} of the CC-like set $E$. Lemma~\ref{lemma11} and Lemma~\ref{lemma12} give some properties of the function $P(t)$.

\begin{lemma} \label{lemma11}
The function $P(t)$ is strictly decreasing, convex and hence continuous on $\D R$.

\end{lemma}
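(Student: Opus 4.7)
The plan is to read everything off the partition function $Z_k(t)=\sum_{\sigma\in\Omega_k}\|\varphi_\sigma'\|^t$ defining $P(t)=\lim_{k\to\infty}\tfrac1k\log Z_k(t)$, using only the uniform bound \eqref{eq1}. The first preliminary step is to record that by the chain rule, for every $\sigma\in\Omega_k$ and every $x\in J$,
\[
B^{-k}\leq|\varphi_\sigma'(x)|\leq b^{-k},
\qquad\text{hence}\qquad B^{-k}\leq\|\varphi_\sigma'\|\leq b^{-k}<1,
\]
since $b>1$. These inequalities are the only quantitative input required.

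For strict monotonicity, take $t_1<t_2$. Writing $\|\varphi_\sigma'\|^{t_2}=\|\varphi_\sigma'\|^{t_1}\cdot\|\varphi_\sigma'\|^{t_2-t_1}$ and applying the upper bound $\|\varphi_\sigma'\|\leq b^{-k}$, I get
\[
Z_k(t_2)\leq b^{-(t_2-t_1)k}\,Z_k(t_1),
\]
so $\tfrac1k\log Z_k(t_2)\leq\tfrac1k\log Z_k(t_1)-(t_2-t_1)\log b$, and passing to the limit yields $P(t_2)\leq P(t_1)-(t_2-t_1)\log b<P(t_1)$. Thus $P$ is strictly decreasing on $\mathbb R$ (with a quantitative gap controlled by $\log b>0$).

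For convexity, the main device is H\"older's inequality. Given $t_1,t_2\in\mathbb R$ and $\lambda\in(0,1)$, write
\[
Z_k\bigl(\lambda t_1+(1-\lambda)t_2\bigr)=\sum_{\sigma\in\Omega_k}\bigl(\|\varphi_\sigma'\|^{t_1}\bigr)^\lambda\bigl(\|\varphi_\sigma'\|^{t_2}\bigr)^{1-\lambda}
\leq Z_k(t_1)^\lambda\,Z_k(t_2)^{1-\lambda},
\]
where H\"older is applied with conjugate exponents $1/\lambda$ and $1/(1-\lambda)$. Taking $\tfrac1k\log$ and letting $k\to\infty$ produces $P(\lambda t_1+(1-\lambda)t_2)\leq\lambda P(t_1)+(1-\lambda)P(t_2)$, which is convexity. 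Continuity on $\mathbb R$ is then automatic: a convex function on an open interval of $\mathbb R$ is continuous there, and the interior of $\mathbb R$ is $\mathbb R$ itself.

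There is no real obstacle here; the only thing to be slightly careful about is making sure the argument does not accidentally assume $t\geq 0$ (the bound $\|\varphi_\sigma'\|\leq b^{-k}<1$ makes the ``strictly decreasing'' estimate go the right way for \emph{all} real $t$), and that H\"older is used with the two nonnegative sequences $(\|\varphi_\sigma'\|^{t_1})_\sigma$ and $(\|\varphi_\sigma'\|^{t_2})_\sigma$ so that the inequality is valid for arbitrary real exponents $t_1,t_2$.
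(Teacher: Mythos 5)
Your proposal is correct and follows essentially the same route as the paper: strict monotonicity from the uniform bound $\|\vp_\gs'\|\leq b^{-k}$ (giving the same gap $-(t_2-t_1)\log b$), convexity from H\"older's inequality applied to $Z_k$, and continuity as a consequence of convexity on $\D R$. The only cosmetic difference is that you derive the bound $\|\vp_\gs'\|\leq b^{-k}$ directly from the chain rule and \eqref{eq1}, whereas the paper invokes Lemma~\ref{lemma3} together with \eqref{eq1}; the substance is identical.
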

\begin{proof} To prove that $P(t)$ is strictly decreasing let $\gd>0$. Then, by Lemma~\ref{lemma3} and Inequality~\eqref{eq1},
\begin{align*}
P(t+\gd)&=\lim_{k\to\infty} \frac 1 k \log \sum_{\gs\in \gO_k}\|\vp_\gs'\|^{t+\gd}\leq \lim_{k\to\infty} \frac 1 k \log \sum_{\gs\in \gO_k}\|\vp_\gs'\|^t b^{-k\gd}\\
&=P(q, t) -\gd \log b<P(q, t),
\end{align*}
i.e., $P(t)$ is strictly decreasing. For $t_1, t_2 \in \D R$ and  $a_1, a_2 > 0$ with $a_1+a_2=1$, using H\"older's inequality, we have
\begin{align*}
P(q, a_1 t_1+a_2t_2)&= \lim_{n\to \infty}\frac 1 k \log\sum_{\gs \in \gO_k}  \|\vp_\gs'\|^{a_1 t_1+a_2 t_2}=\lim_{k\to \infty}\frac 1k \log\sum_{\gs \in \gO_k}\Big ({\|\vp_\gs'\|}^{t_1}\Big)^{a_1}\Big( \|\vp_\gs'\|^{t_2}\Big)^{a_2}\\
&\leq \lim_{k\to \infty}\frac 1 k \log\Big(\sum_{\gs \in \gO_k}\|\vp_\gs'\|^{t_1}
\Big)^{a_1}\Big(\sum_{\gs \in \gO_k}\|\vp_\gs'\|^{t_2}\Big)^{a_2}\\
&=a_1P(t_1)+a_2 P(t_2),
\end{align*}
i.e., $P(t)$ is convex and hence continuous on $\D R$.
\end{proof}
Let us now prove the following lemma.
\begin{lemma} \label{lemma12}
There exists a unique $h \in (0, 1)$ such that $P(h)=0$.
\end{lemma}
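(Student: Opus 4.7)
The plan is to combine the two structural facts about $P$ already established in Lemma~\ref{lemma11} (strict monotonicity and continuity on $\mathbb{R}$) with an evaluation of $P$ at two concrete values of $t$, and then invoke the intermediate value theorem.

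First, I would compute $P(0)$. Since $\|\vp_\gs'\|^0 = 1$ for every $\gs$, the partition sum is $Z_k(0) = \sum_{\gs\in\gO_k} 1 = N^k$, so $P(0) = \lim_{k\to\infty}\frac{1}{k}\log N^k = \log N > 0$, using the standing assumption $N \geq 2$.

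Next, I would produce a $t_0 > 0$ with $P(t_0) < 0$. The bound \eqref{eq1} gives $|\vp_{k,j}'(x)| \leq b^{-1}$ for every $k, j$ and $x \in J$. Applying the chain rule $|\vp_\gs'(x)| = \prod_{i=1}^{n} |\vp_{i,\gs_i}'(\vp_{i+1,\gs_{i+1}}\circ\cdots\circ\vp_{n,\gs_n}(x))|$, we conclude $\|\vp_\gs'\| \leq b^{-n}$ for every $\gs \in \gO_n$. Hence, for $t \geq 0$,
\[
Z_k(t) = \sum_{\gs \in \gO_k} \|\vp_\gs'\|^t \leq N^k \cdot b^{-kt},
\]
so $P(t) \leq \log N - t\log b$. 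Since $b > 1$, choosing any $t_0 > \log N/\log b$ yields $P(t_0) < 0$.

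By Lemma~\ref{lemma11}, $P$ is continuous on $\mathbb{R}$, so the intermediate value theorem applied on $[0, t_0]$ produces some $h \in (0, t_0)$ with $P(h) = 0$; note $h > 0$ because $P(0) > 0$. Uniqueness is immediate: by Lemma~\ref{lemma11}, $P$ is strictly decreasing, so it attains each value at most once. I do not anticipate a serious obstacle here; the only thing to be slightly careful about is that the elementary bound $\|\vp_\gs'\|\leq b^{-|\gs|}$ only gives an upper estimate on $Z_k(t)$ for $t\geq 0$, which is exactly what is needed to force $P$ below zero, so the argument is clean.
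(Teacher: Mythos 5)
Your proposal is correct and follows essentially the same route as the paper: compute $P(0)=\log N>0$, use the bound $\|\vp_\gs'\|\leq b^{-|\gs|}$ from \eqref{eq1} to get $P(t)\leq \log N-t\log b$ and hence a negative value of $P$ for large $t$, then conclude existence and uniqueness of the zero from the continuity and strict monotonicity established in Lemma~\ref{lemma11}. The only cosmetic difference is that you fix a concrete $t_0>\log N/\log b$ and apply the intermediate value theorem on $[0,t_0]$, whereas the paper phrases the same estimate as $\lim_{t\to+\infty}P(t)=-\infty$.
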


\begin{proof}
By Lemma~\ref{lemma11}, the function $P(t)$ is strictly decreasing and continuous on $\D R$, and so there exists a unique $h \in \D R$ such that $P(h)=0$. Note that
\[
P(0)=\lim_{k\to\infty} \frac 1 k \log \sum_{\gs\in \gO_k}1=\lim_{k\to\infty} \frac 1 k \log q^k=\log q \geq \log 2>0.
\]
Again, by Lemma~\ref{lemma3} and Inequality~\eqref{eq1},
\[P(1)=\lim_{k\to\infty} \frac 1 k \log \|\vp_\gs'\| \leq \lim_{k\to\infty} \frac 1 k \log b^{-kq}=-q \log b<0.\]
Therefore, by the intermediate value theorem, $h\in (0, 1)$ and thus, the lemma is obtained.
\end{proof}
In the next section, we state and prove the main result of the paper.

\section{Main result}
The following theorem gives the main result of the paper.

\begin{theorem}\label{theorem}  Let $E$ be the cookie-cutter-like set associated with the family $\set{f_k}_{k=1}^\infty$ of cookie-cutter mappings, and $h \in (0, 1)$ be such that $P(h)=0$. Then,
\[\te{dim}_{\te{H}}(E)=\te{dim}_{\te{P}}(E)=\ul{\te{dim}}_{\te{B}}(E)=\ol{\te{dim}}_{\te{B}}(E)=h,\]
and
\[0<\C H^h(E)\leq \C P^h(E) <\infty.\]

\end{theorem}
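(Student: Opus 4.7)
The plan is to construct the Borel probability measure $\mu_h$ promised in the abstract, verify that its mass on every basic interval is comparable to $|J_\gs|^h$, and then feed this comparability into the Moran covering to sandwich every dimension and both measures.

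The first task is to bound $Z_n(h)=\sum_{\gs\in\gO_n}\|\vp_\gs'\|^h$ uniformly in $n$. Since $P(h)=0$ and $\log Z_n(h)$ is subadditive (by the upper half of Lemma~\ref{lemma3} applied with $h\geq 0$), Fekete's lemma gives $\inf_n\tfrac1n\log Z_n(h)=0$, hence $Z_n(h)\geq 1$. The lower half of Lemma~\ref{lemma3} makes $\gx^{-h}Z_n(h)$ supermultiplicative, so $\sup_n\tfrac1n\log(\gx^{-h}Z_n(h))=0$, forcing $Z_n(h)\leq\gx^h$. Choosing any $x_\gs\in J_\gs$, one then sets
\[L_n(g)=\frac{1}{Z_n(h)}\sum_{\gs\in\gO_n}\|\vp_\gs'\|^h\,g(x_\gs),\q g\in C(J),\]
takes a Banach limit $L=\te{LIM}_n L_n$, and lets $\mu_h$ be its Riesz representative. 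For fixed $\gs\in\gO_k$ and $n\geq k$, applying Lemma~\ref{lemma3} to both $\sum_{\gt\in\gO_{n-k}}\|\vp_{\gs\gt}'\|^h$ and to the analogous decomposition $Z_n(h)=\sum_{\gs''\in\gO_k}\sum_{\gt'\in\gO_{n-k}}\|\vp_{\gs''\gt'}'\|^h$, combined with the uniform bounds on $Z_k(h)$, produces constants $c_1,c_2>0$ independent of $\gs$ with $c_1\|\vp_\gs'\|^h\leq L_n(\mathbf{1}_{J_\gs})\leq c_2\|\vp_\gs'\|^h$ for all $n\geq k$. Passing to the Banach limit (after the standard approximation of indicators by continuous functions) and using Proposition~\ref{prop31} to convert $\|\vp_\gs'\|^h$ into $|J_\gs|^h$, one obtains
\[c_1'|J_\gs|^h\ \leq\ \mu_h(J_\gs)\ \leq\ c_2'|J_\gs|^h\qq\te{for every }\gs\in\gO,\]
and $\mu_h$ is supported in $E$ because $L_n(\mathbf{1}_{\UU_{\gs\in\gO_n}J_\gs})=1$ for every $n$.

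With $\mu_h$ in hand, everything else is short. For $\C H^h(E)<\infty$, cover $E$ by $\{J_\gs:\gs\in\gO_n\}$, whose mesh is at most $b^{-n}\to 0$, and read off $\C H^h_{b^{-n}}(E)\leq \sum_{\gs\in\gO_n}|J_\gs|^h\leq \gx^h Z_n(h)\leq \gx^{2h}$. For the remaining estimates I use the $r$-Moran covering $\C U_r$, whose members satisfy $\gx^{-1}B^{-1}r\leq|J_\gs|\leq r$ by Proposition~\ref{prop31}. Given $x\in E$ and small $r>0$, at most $O(1)$ members of $\C U_r$ meet $B(x,r)$ (pairwise disjoint intervals in $\D R$ of length $\geq\gx^{-1}B^{-1}r$ cannot accumulate in a ball of radius $r$), so $\mu_h(B(x,r))\leq C r^h$ and the mass distribution principle yields $\C H^h(E)\geq \mu_h(E)/C>0$. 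Conversely, the unique $J_\gs\in\C U_r$ containing $x$ sits inside $B(x,r)$ and has $|J_\gs|\geq\gx^{-1}B^{-1}r$, so $\mu_h(B(x,r))\geq c r^h$, and the dual principle for packing measure gives $\C P^h(E)<\infty$. Finally $|\C U_r|\cdot(\gx^{-1}B^{-1}r)^h\leq \sum_{\gs\in\C U_r}|J_\gs|^h\leq (c_1')^{-1}\sum_{\gs\in\C U_r}\mu_h(J_\gs)\leq (c_1')^{-1}$ forces $\ol{\te{dim}}_B E\leq h$, and \eqref{eq0} together with the finite positive $h$-dimensional Hausdorff measure pins all four dimensions at $h$.

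The genuinely delicate step is the two-sided comparison $\mu_h(J_\gs)\asymp\|\vp_\gs'\|^h$: it requires squeezing both the partial and the full pressure sums via Lemma~\ref{lemma3} \emph{from both sides} with constants independent of $|\gs|$, the uniform bounds $1\leq Z_n(h)\leq\gx^h$ derived above, and the handoff from continuous test functions to indicators across the Banach limit. Everything after that reduces to the bounded distortion principle, the Moran covering, and the two mass distribution principles.
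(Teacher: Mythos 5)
Your argument is correct, and its second half (the $r$-Moran covering, the bounded-multiplicity count, and the two density estimates giving $0<\C H^h(E)$, $\C P^h(E)<\infty$ and $\ol{\te{dim}}_B E\leq h$) is essentially identical to the paper's Propositions~\ref{prop111}, \ref{prop121}, \ref{prop123} and Lemma~\ref{lemma122}. Where you genuinely diverge is in the two preparatory steps. First, you get the uniform bounds $1\leq Z_n(h)\leq \gx^h$ directly from Fekete's lemma applied to the subadditive sequence $\log Z_n(h)$ and the superadditive sequence $\log\left(\gx^{-h}Z_n(h)\right)$; the paper instead bounds the sums for auxiliary exponents $s_\ast<h<s^\ast$ (Proposition~\ref{prop21}) and then passes to $t=h$ by an intermediate-value/contradiction argument (Corollary~\ref{cor1}). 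Your route is more direct and yields the same conclusion with a slightly better constant. Second, you construct $\mu_h$ as the Riesz representative of a Banach limit of the normalized functionals $L_n$ on $C(J)$, whereas the paper works in the code space: it takes Banach limits of the cylinder ratios $\gn_n(C(\gs))$, verifies the consistency $\sum_{j}\gn(C(\gs j))=\gn(C(\gs))$, applies Kolmogorov extension, and pushes the measure forward under the coding map $\pi$. The paper's symbolic construction has the advantage that $\mu_h(J_\gs)$ is literally a cylinder mass, so no approximation is needed; in your construction the handoff from continuous test functions to the indicator of $J_\gs$ (in both directions, and for the support statement) is the one step that must be written out. It does work: the finitely many rank-$k$ basic intervals are pairwise disjoint compact intervals, hence positively separated, so for $n\geq k$ only words extending $\gs$ contribute to $L_n$ of a continuous function squeezed between the indicator of $J_\gs$ and that of a sufficiently small neighborhood, and the Banach limit's shift-invariance and positivity then transfer your bounds $c_1\|\vp_\gs'\|^h\leq L_n(\mathbf{1}_{J_\gs})\leq c_2\|\vp_\gs'\|^h$ (with $c_1=\gx^{-2h}$, $c_2=\gx^{h}$) to $\mu_h(J_\gs)$, after which Lemma~\ref{lemma32} converts $\|\vp_\gs'\|^h$ into $|J_\gs|^h$ exactly as the paper's $\gh=\gx^{9h}$ does in Proposition~\ref{prop32}. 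With that detail supplied, your proof is a complete and somewhat cleaner alternative to the paper's, at the cost of invoking Riesz representation where the paper only needs measures on cylinders.
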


The following proposition is known.

\begin{prop}(see \cite[Proposition 2.6]{MRW}). \label{prop31}   For any $n\geq 1$, $\gs\in \gO_n$, $x \in J_\gs$, we have
\[\gx^{-1} \leq |J_\gs|\cdot |F_n'(x)| \leq \gx,\]
where $F_n(x)=f_n\circ f_{n-1}\circ \cdots \circ f_1(x)$. Moreover, for each $1\leq j\leq q$, we get $\left|J_{\gs\ast j}\right| \geq \gx^{-1} B^{-1}|J_\gs|$, where $\gx$ is the constant of Lemma~\ref{lemma1}.
\end{prop}
From the above proposition, the following lemma easily follows.
\begin{lemma} \label{lemma32}
Let $n\geq 1$, $\gs\in \gO_n$ and $x \in J$. Then,
\[\gx^{-1} |J_\gs| \leq |\vp_\gs'(x)| \leq \gx |J_\gs|,\]
where $\gx$ is the constant of Lemma~\ref{lemma1}.
\end{lemma}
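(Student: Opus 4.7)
The plan is to deduce this directly from the Bounded distortion principle (Proposition~\ref{prop31}) by exploiting the fact that $\vp_\gs$ is the branch inverse of $F_n$ on $J_\gs$.

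First I would observe that for $\gs \in \gO_n$ and $x \in J$, we have $F_n(\vp_\gs(x)) = x$, so the chain rule gives
\[|F_n'(\vp_\gs(x))| \cdot |\vp_\gs'(x)| = 1.\]
Next, setting $y := \vp_\gs(x)$, we have $y \in J_\gs$, so Proposition~\ref{prop31} applies and yields
\[\gx^{-1} \leq |F_n'(y)| \cdot |J_\gs| \leq \gx.\]
Finally I would substitute $|F_n'(y)| = 1/|\vp_\gs'(x)|$ into this inequality, obtaining
\[\gx^{-1} \leq \frac{|J_\gs|}{|\vp_\gs'(x)|} \leq \gx,\]
and rearranging gives the desired bound $\gx^{-1}|J_\gs| \leq |\vp_\gs'(x)| \leq \gx |J_\gs|$.

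There is no real obstacle here; the lemma is essentially a restatement of the Bounded distortion principle in the language of $\vp_\gs$ rather than $F_n$. The only thing one has to be careful about is making sure the point at which one applies Proposition~\ref{prop31} lies in $J_\gs$, which is automatic because $\vp_\gs$ maps $J$ into $J_\gs$.
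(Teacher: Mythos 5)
Your proof is correct and follows essentially the same route as the paper: both use $F_n(\vp_\gs(x))=x$ with the chain rule to get $|F_n'(\vp_\gs(x))|\cdot|\vp_\gs'(x)|=1$, note that $\vp_\gs(x)\in J_\gs$, and then invoke the bounded distortion principle (Proposition~\ref{prop31}) and rearrange. Your write-up simply spells out the substitution step that the paper leaves implicit.
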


Let us now prove the following lemma.
\begin{lemma}\label{lemma33}
Let $\gs, \gt \in \gO$. Then,
\[\gx^{-3} |J_\gs||J_\gt| \leq |J_{\gs\gt}| \leq \gx^3 |J_\gs||J_\gt|,\]
where $\gx$ is the constant of Lemma~\ref{lemma1}.
\end{lemma}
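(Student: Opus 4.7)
The plan is to chain together Lemma~\ref{lemma32}, Lemma~\ref{lemma3}, and the elementary identification of $|J_\gs|$ with $\|\vp_\gs'\|$ arising from the mean value theorem and the normalization $|J|=1$. The whole proof will stay in the language of derivatives at a single point and passing back to diameters at the end.

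First I would record two elementary consequences of Lemma~\ref{lemma32} that I will use repeatedly. Given $\gs \in \gO$, pick the endpoints $x_0, y_0$ of $J$ (so $|x_0-y_0|=|J|=1$). By the mean value theorem there is $w\in J$ with $|J_\gs|=|\vp_\gs(x_0)-\vp_\gs(y_0)|=|\vp_\gs'(w)|$, so in particular $|J_\gs|\leq \|\vp_\gs'\|$. Combined with Lemma~\ref{lemma32} (applied at the maximizing point of $|\vp_\gs'|$), this gives the two-sided comparison
\[
|J_\gs| \;\leq\; \|\vp_\gs'\| \;\leq\; \gx\,|J_\gs|,
\]
and likewise for $\gt$ and for $\gs\gt$.

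Next I would apply Lemma~\ref{lemma3}, which is stated precisely in the form I need:
\[
\gx^{-1}\|\vp_\gs'\|\|\vp_\gt'\| \;\leq\; \|\vp_{\gs\gt}'\| \;\leq\; \|\vp_\gs'\|\|\vp_\gt'\|.
\]
For the upper estimate I combine this with the preceding display to obtain
\[
|J_{\gs\gt}| \;\leq\; \|\vp_{\gs\gt}'\| \;\leq\; \|\vp_\gs'\|\|\vp_\gt'\| \;\leq\; \gx\,|J_\gs|\cdot \gx\,|J_\gt| \;=\; \gx^{2}|J_\gs||J_\gt| \;\leq\; \gx^{3}|J_\gs||J_\gt|,
\]
using $\gx>1$ in the last step so the constant matches the statement. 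For the lower estimate I use the comparison $|J_{\gs\gt}|\geq \gx^{-1}\|\vp_{\gs\gt}'\|$ (again from Lemma~\ref{lemma32} applied at a point $w'\in J$ with $|J_{\gs\gt}|=|\vp_{\gs\gt}'(w')|$, which sits below $\|\vp_{\gs\gt}'\|$ by at most a factor $\gx$) and then the lower half of Lemma~\ref{lemma3}:
\[
|J_{\gs\gt}| \;\geq\; \gx^{-1}\|\vp_{\gs\gt}'\| \;\geq\; \gx^{-2}\|\vp_\gs'\|\|\vp_\gt'\| \;\geq\; \gx^{-2}|J_\gs||J_\gt| \;\geq\; \gx^{-3}|J_\gs||J_\gt|.
\]

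There is no real obstacle here; the work is entirely bookkeeping of the distortion constant $\gx$. The only point requiring a little care is ensuring that the passage between the supremum $\|\vp_\gs'\|$ and the pointwise value producing $|J_\gs|$ via the mean value theorem is only incurred once on each side, so that the constants collapse to $\gx^{\pm 3}$; this is why the bounded distortion principle (Lemma~\ref{lemma1}) is essential, and why the assumption $|J|=1$ is convenient for avoiding extra factors in the identification $|J_\gs|=|\vp_\gs'(w)|$.
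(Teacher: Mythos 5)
Your argument is correct and takes essentially the same route as the paper: the paper applies the chain rule pointwise together with Proposition~\ref{prop1} and Lemma~\ref{lemma32}, while you package the same distortion facts through Lemma~\ref{lemma3} and the two-sided comparison $|J_\gs|\le\|\vp_\gs'\|\le\gx|J_\gs|$. The difference is only bookkeeping; your version even gives the slightly sharper constant $\gx^{\pm 2}$, which you correctly relax to $\gx^{\pm 3}$ to match the statement.
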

\begin{proof}
For $\gs, \gt \in \gO$, we have  $|\vp_{\gs\gt}'(x)|=|\vp_\gs'(y)||\vp_\gt'(x)|$, where $y=\vp_\gt(x)$ for $x \in J$. Again, Proposition~\ref{prop1} gives that for any $x, y \in J$, \[\gx^{-1} |\vp_\gs'(y)| \leq |\vp_\gs'(x)| \leq \gx |\vp_\gs'(y)|.\]
Hence, Lemma~\ref{lemma32} implies
\[\gx^{-3} |J_\gs||J_\gt|\leq \gx^{-1} |\vp_\gs'(y)||\vp_\gt'(x)|=\gx^{-1} |\vp_{\gs\gt}'(x)|\leq |J_{\gs\gt}|\leq \gx |\vp_{\gs\gt}'(x)|\leq \gx^{3} |J_\gs||J_\gt|,\]
and thus, the lemma is obtained.
\end{proof}

\begin{note} Lemma~\ref{lemma32} implies that \[\gx^{-1} |J_\gs| \leq \sup_{x \in J}|\vp_\gs'(x)|=\|\vp_\gs'\| \leq \gx |J_\gs|,\] and so the topological pressure $P(t)$ can be rewritten as follows:
\[P(t)=\lim_{k\to \infty} \frac 1 k \log \sum_{\gs\in \gO_k}|J_\gs|^t. \]

\end{note}

Let us now prove the following proposition, which plays a vital role in the paper.
\begin{prop} \label{prop21}
Let $h \in (0, 1)$ be unique such that $P(h)=0$, and let $s_\ast$ and $s^\ast$ be any two arbitrary real numbers with $0<s_\ast<h<s^\ast<1$. Then, for all $n\geq 1$,
\[\gx^{-3}< \sum_{\gs \in \gO_n}|J_\gs|^{s_\ast} \te{ and } \sum_{\gs \in \gO_n}|J_\gs|^{s_\ast}<\gx^3,\]
where $\gx$ is the constant of Lemma~\ref{lemma1}.
\end{prop}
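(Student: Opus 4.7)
The engine of the proof is Lemma~\ref{lemma33}, which upgrades the partition sum $a_n(s) := \sum_{\gs\in\gO_n}|J_\gs|^s$ from merely sub-additive in $\log$ to quasi-multiplicative: for any $\gs\in\gO_n$, $\gt\in\gO_m$ and $s>0$,
\[\gx^{-3s}|J_\gs|^s|J_\gt|^s \le |J_{\gs\gt}|^s \le \gx^{3s}|J_\gs|^s|J_\gt|^s.\]
Summing over $\gO_n\times\gO_m=\gO_{n+m}$ yields the two-sided estimate
\[\gx^{-3s}\,a_n(s)\,a_m(s) \;\le\; a_{n+m}(s) \;\le\; \gx^{3s}\,a_n(s)\,a_m(s).\]
By the Note just before the proposition, $P(s)=\lim_k \tfrac{1}{k}\log a_k(s)$; combined with Lemma~\ref{lemma11} and Lemma~\ref{lemma12}, $P(s_\ast)>0>P(s^\ast)$ strictly, because $P$ is strictly decreasing and vanishes only at $h$.

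\textbf{Upper bound for $s^\ast$.} The plan is to rescale $c_n:=\gx^{-3s^\ast}a_n(s^\ast)$ so that the right half of the quasi-multiplicativity becomes genuine super-multiplicativity: $c_{n+m}\ge c_n c_m$. I will argue by contradiction: if $a_n(s^\ast)\ge \gx^{3s^\ast}$ for some $n\ge 1$, then $c_n\ge 1$, and iterating super-multiplicativity gives $c_{kn}\ge c_n^k\ge 1$, hence $a_{kn}(s^\ast)\ge \gx^{3s^\ast}$ for every $k\ge 1$. Taking $\tfrac{1}{kn}\log$ and letting $k\to\infty$ forces $P(s^\ast)\ge 0$, contradicting $P(s^\ast)<0$.

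\textbf{Lower bound for $s_\ast$.} Symmetrically, rescale $b_n:=\gx^{3s_\ast}a_n(s_\ast)$ so that the left half of quasi-multiplicativity becomes genuine sub-multiplicativity: $b_{n+m}\le b_n b_m$. Suppose for contradiction $a_n(s_\ast)\le \gx^{-3s_\ast}$ for some $n\ge 1$; then $b_n\le 1$, so $b_{kn}\le b_n^k\le 1$, and therefore $a_{kn}(s_\ast)\le \gx^{-3s_\ast}$ for every $k\ge 1$. Taking $\tfrac{1}{kn}\log$ and letting $k\to\infty$ forces $P(s_\ast)\le 0$, contradicting $P(s_\ast)>0$.

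\textbf{Comments on the main obstacle.} The nontrivial move is the rescaling step that converts the two-sided bound into one-sided multiplicativity: without rescaling, the inequalities include the unwanted factor $\gx^{\pm 3s}$ that would spoil the iteration $b_{kn}\le b_n^k$ (resp.\ $c_{kn}\ge c_n^k$). Once that observation is in hand, the proof reduces to a clean contradiction with the sign of $P$ at $s_\ast$ and $s^\ast$; the strict inequalities in the statement correspond exactly to the strict inequalities $P(s_\ast)>0$ and $P(s^\ast)<0$, which in turn come from Lemma~\ref{lemma11} together with $P(h)=0$. No further machinery beyond Lemma~\ref{lemma33} and the definition of $P$ is needed.
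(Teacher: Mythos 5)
Your proof is correct and is essentially the paper's own argument: both iterate the quasi-multiplicativity of Lemma~\ref{lemma33} along the subsequence $kn$ (the paper writes this directly as $\sum_{\go\in\gO_{np}}|J_\go|^{s}\le \gx^{3(p-1)s}\bigl(\sum_{\gs\in\gO_n}|J_\gs|^{s}\bigr)^p$ and its reverse) and then compare $\tfrac{1}{kn}\log\sum_{\gs\in\gO_{kn}}|J_\gs|^s$ with the strict sign of $P(s_\ast)$, $P(s^\ast)$, your rescaling plus contradiction being only a repackaging of the paper's direct limit computation. One cosmetic slip: the super-multiplicativity $c_{n+m}\ge c_nc_m$ comes from the \emph{lower} bound $|J_{\gs\gt}|\ge\gx^{-3}|J_\gs||J_\gt|$ and the sub-multiplicativity $b_{n+m}\le b_nb_m$ from the \emph{upper} bound (you labeled the halves the other way around), but the inequalities you actually use are the correct ones.
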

\begin{proof}
Let $s_\ast<h$. As the pressure function $P(t)$ is strictly decreasing, $P(s_\ast)>P(h)=0$. Then, for any positive integer $n$, by Lemma~\ref{lemma33}, we have
\begin{align*}
0&<P(s_\ast) =\lim_{p\to \infty} \frac 1{np} \log \sum_{\go\in \gO_{np}}|J_\go|^{s_\ast}\leq \lim_{p\to \infty} \frac 1{np} \log \gx^{3(p-1)s_\ast} \left(\sum_{\gs\in \gO_{n}}|J_\gs|^{s_\ast}\right)^p,
\end{align*}
which implies \[0< \frac 1n \log \left(\gx^{3s_\ast} \sum_{\gs\in \gO_{n}}|J_\gs|^{s_\ast}\right) \te{ and so } \sum_{\gs\in \gO_{n}}|J_\gs|^{s_\ast}>\gx^{-3s_\ast}>\gx^{-3}. \]
Now if $h<s^\ast$, then $P(s^\ast)<0$ as $P(t)$ is strictly decreasing.  Then, for any positive integer $n$, by Lemma~\ref{lemma33}, we have
\begin{align*}
0>P(s^\ast) =\lim_{p\to \infty} \frac 1{np} \log \sum_{\go\in \gO_{np}}|J_\go|^{s^\ast}\geq \lim_{p\to \infty} \frac 1{np} \log \gx^{-3(p-1)s^\ast} \left(\sum_{\gs\in \gO_{n}}|J_\gs|^{s^\ast}\right)^p,
\end{align*}
which implies \[0>\frac 1n \log \left(\gx^{-3s^\ast}\sum_{\gs\in \gO_{n}}|J_\gs|^{s^\ast}\right) \te{ and so } \sum_{\gs\in \gO_{n}}|J_\gs|^{s^\ast}<\gx^{3s^\ast}<\gx^3. \]
Thus, the proposition is obtained.
\end{proof}
\begin{cor} \label{cor1}
Since  $s_\ast$ and $s^\ast$ be any two arbitrary real numbers with $0<s_\ast<h<s^\ast<1$, from the above proposition it follows that for all $n\geq 1$,
\[\gx^{-3}\leq \sum_{\gs \in \gO_n}|J_\gs|^h\leq \gx^{3}.\]

\end{cor}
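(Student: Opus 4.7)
The plan is to derive the corollary from Proposition~\ref{prop21} by a simple continuity/limit argument, using the fact that for fixed $n$, the sum $g(s) := \sum_{\sigma \in \Omega_n} |J_\sigma|^s$ is a finite sum of continuous functions of $s$, hence continuous. The same continuity holds for the bounding functions $s \mapsto \xi^{-3s}$ and $s \mapsto \xi^{3s}$.

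Concretely, fix $n \geq 1$. For the lower bound, I would choose an arbitrary sequence $s_\ast^{(m)} \nearrow h$ with $0 < s_\ast^{(m)} < h$. By Proposition~\ref{prop21}, for every $m$ we have
\[
\sum_{\sigma \in \Omega_n} |J_\sigma|^{s_\ast^{(m)}} > \xi^{-3 s_\ast^{(m)}}.
\]
Since $\Omega_n$ is finite (cardinality $N^n$) and $s \mapsto |J_\sigma|^s$ is continuous for each fixed $\sigma$, the left-hand side converges to $\sum_{\sigma \in \Omega_n}|J_\sigma|^h$ as $m \to \infty$; the right-hand side converges to $\xi^{-3h}$. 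Passing to the limit in the strict inequality yields the (non-strict) bound $\sum_{\sigma \in \Omega_n}|J_\sigma|^h \geq \xi^{-3h}$.

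For the upper bound, I would proceed symmetrically with a sequence $s^{\ast(m)} \searrow h$ satisfying $s^{\ast(m)} > h$. Proposition~\ref{prop21} gives
\[
\sum_{\sigma \in \Omega_n} |J_\sigma|^{s^{\ast(m)}} < \xi^{3 s^{\ast(m)}},
\]
and taking $m \to \infty$ by the same continuity argument produces $\sum_{\sigma \in \Omega_n}|J_\sigma|^h \leq \xi^{3h}$. Combining the two bounds gives the stated double inequality for every $n \geq 1$.

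There is no real obstacle here; the only point worth verifying is that the limit may be moved inside the finite sum, which is immediate since $\Omega_n$ is finite for each $n$ (so no uniformity in $n$ is needed). Note also that the argument does not require $|J_\sigma| < 1$ or any monotonicity in $s$; continuity of $s \mapsto |J_\sigma|^s$ alone suffices, which is why the bounds come out cleanly as $\xi^{\pm 3h}$.
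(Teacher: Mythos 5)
Your argument is correct. You derive the corollary by fixing $n$, letting $s_\ast^{(m)}\nearrow h$ and $s^{\ast(m)}\searrow h$, and passing to the limit in the strict inequalities of Proposition~\ref{prop21}, using that $t\mapsto\sum_{\gs\in\gO_n}|J_\gs|^t$ is a finite sum of continuous functions (each $|J_\gs|>0$, so this is unproblematic); the strict bounds correctly become the non-strict bounds $\gx^{-3h}\leq\sum_{\gs\in\gO_n}|J_\gs|^h\leq\gx^{3h}$ asserted in the statement. The paper reaches the same conclusion by a different device: it argues by contradiction, assuming for instance $\sum_{\gs\in\gO_n}|J_\gs|^h>\gx^{3h}$, introduces the continuous function $f_n(t)=\gx^{3t}-\sum_{\gs\in\gO_n}|J_\gs|^t$, and applies the intermediate value theorem on $(h,s^\ast)$ to produce an $s>h$ with $\sum_{\gs\in\gO_n}|J_\gs|^s=\gx^{3s}$, contradicting Proposition~\ref{prop21} applied at that $s$. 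Both proofs rest on the same two ingredients --- continuity in the exponent for fixed $n$ and the validity of Proposition~\ref{prop21} at every point strictly on either side of $h$ --- but your direct limiting argument is the more economical of the two, since it avoids the contradiction scaffolding and the intermediate value theorem entirely, and it makes transparent why the inequalities close up (strict becomes non-strict in the limit), which is exactly the form the corollary claims.
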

The following proposition plays an important role in the rest of the paper.
\begin{prop} \label{prop32} \tbf{(Gibbs-like measure)}
Let $h\in (0, 1)$ be such that $P(h)=0$. Then, there exists a constant $\gh\geq 1$ and a probability measure $\mu_h$ supported by $E$ such that for any $k\geq 1$ and $\go_0\in \gO_k$,
\[\gh^{-1} |J_{\go_0}|^h\leq \mu_h(J_{\go_0}) \leq \gh |J_{\go_0}|^h.\]
\end{prop}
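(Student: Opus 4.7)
The plan is to build $\mu_{h}$ as a limit of natural finite approximations supported on $E$. For each $n\geq 1$, pick some $x_{\gs}\in J_{\gs}\cap E$ (say $x_{\gs}=\pi(\gs\,\bar 1)$ where $\bar 1=(1,1,\dots)$) and define the probability measure
\[\gn_{n}=\frac{1}{Z_{n}}\sum_{\gs\in\gO_{n}}|J_{\gs}|^{h}\,\gd_{x_{\gs}},\qquad Z_{n}=\sum_{\gs\in\gO_{n}}|J_{\gs}|^{h}.\]
By Corollary~\ref{cor1} we have $\gx^{-3h}\leq Z_{n}\leq \gx^{3h}$, so the normalizing constants are bounded above and below uniformly in $n$.

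Next, I would estimate $\gn_{m}(J_{\gs})$ for $\gs\in\gO_{n}$ and $m\geq n$. Writing each $\go\in\gO_{m}$ with $J_{\go}\ci J_{\gs}$ as $\go=\gs\gt$ with $\gt\in\gO_{m-n}$, Lemma~\ref{lemma33} gives
\[\gx^{-3h}|J_{\gs}|^{h}\sum_{\gt\in\gO_{m-n}}|J_{\gt}|^{h}\;\leq\;\sum_{\gt\in\gO_{m-n}}|J_{\gs\gt}|^{h}\;\leq\;\gx^{3h}|J_{\gs}|^{h}\sum_{\gt\in\gO_{m-n}}|J_{\gt}|^{h}.\]
Combining this with Corollary~\ref{cor1} applied to $Z_{m}$ and to $\sum_{\gt\in\gO_{m-n}}|J_{\gt}|^{h}$ yields the uniform bound
\[\gx^{-9h}|J_{\gs}|^{h}\;\leq\;\gn_{m}(J_{\gs})\;\leq\;\gx^{9h}|J_{\gs}|^{h}\qquad(m\geq n=|\gs|).\]

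Now I would use a Banach limit $\te{LIM}$ on bounded sequences to define, for each $\vp\in C(J)$,
\[\gL(\vp):=\te{LIM}_{m}\int_{J}\vp\,d\gn_{m}.\]
Then $\gL$ is a positive linear functional on $C(J)$ with $\gL(1)=1$, so by the Riesz representation theorem there is a unique Borel probability measure $\mu_{h}$ on $J$ with $\gL(\vp)=\int\vp\,d\mu_{h}$. Since each $\gn_{m}$ is supported on $\UU_{\go\in\gO_{m}}J_{\go}$ and $E=\II_{m}\UU_{\go\in\gO_{m}}J_{\go}$, a routine approximation shows $\mu_{h}$ is supported on $E$.

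Finally, I need to transfer the two-sided inequality from $\gn_{m}$ to $\mu_{h}$. The key observation is that $J_{\gs}\cap E$ is clopen in $E$: its complement in $E$ is the finite union $\UU_{\gt\in\gO_{|\gs|},\,\gt\neq\gs}(J_{\gt}\cap E)$, which is closed by the net property. By Tietze extension there is a continuous $\vp_{\gs}\in C(J)$ with $\vp_{\gs}|_{E}=\mbf 1_{J_{\gs}\cap E}$ and $0\leq\vp_{\gs}\leq 1$. Because both $\mu_{h}$ and eventually every $\gn_{m}$ (once $m\geq |\gs|$) are supported on $\UU_{\gt\in\gO_{|\gs|}}J_{\gt}$ and the $J_{\gt}$ are pairwise disjoint, one checks $\int\vp_{\gs}\,d\gn_{m}=\gn_{m}(J_{\gs})$ and $\int\vp_{\gs}\,d\mu_{h}=\mu_{h}(J_{\gs})$; hence $\mu_{h}(J_{\gs})=\te{LIM}_{m}\gn_{m}(J_{\gs})$ inherits the bound with $\gh:=\gx^{9h}$.

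The step I expect to be the most delicate is the last one: the Banach limit and the Riesz representation naturally produce values of $\mu_{h}$ on continuous test functions, not on the (possibly non-open) basic intervals $J_{\gs}$. The clopen-in-$E$ property of $J_{\gs}\cap E$, together with the fact that both $\mu_{h}$ and (for $m$ large) $\gn_{m}$ charge no mass outside $\UU_{\gt\in\gO_{|\gs|}}J_{\gt}$, is what lets the inequality survive the passage to the limit; this is where I would be most careful in writing out the argument.
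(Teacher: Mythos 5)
Your proposal is correct, and it reaches the same quantitative bound ($\gh=\gx^{9h}$, coming from one application of Lemma~\ref{lemma33} plus two applications of Corollary~\ref{cor1}) as the paper, but by a genuinely different construction. The paper works on the code space: it defines $\gn_n(C(\gs))$ as the ratio $\sum_{\gt\in \gO_n}|J_{\gs\gt}|^h\big/\sum_{\gt\in\gO_{|\gs|+n}}|J_\gt|^h$, takes a Banach limit in $n$ for each fixed cylinder, uses the \emph{shift invariance} of the Banach limit to verify the additivity $\sum_{j=1}^N\gn(C(\gs j))=\gn(C(\gs))$, invokes the Kolmogorov extension theorem to get a Borel probability measure on $\gO_\infty$, and pushes it forward by $\pi$; the two-sided bound on cylinders is then immediate. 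You instead build honest discrete probability measures $\gn_m=Z_m^{-1}\sum_{\gs\in\gO_m}|J_\gs|^h\gd_{x_\gs}$ on $E$, so additivity is automatic and no extension theorem is needed; the Banach limit is applied to $\int\vp\,d\gn_m$ and Riesz representation produces $\mu_h$, after which the work shifts to transferring the estimate from test functions back to the sets $J_\gs$ — which your clopen-in-$E$/Tietze argument handles correctly, since $\mu_h$ and all $\gn_m$ charge only $E$ and $\mbf 1_{J_\gs\cap E}$ is continuous on $E$. Two small remarks: in your scheme the shift invariance of the Banach limit is not actually needed (the bounds $\gx^{-9h}|J_\gs|^h\leq\gn_m(J_\gs)\leq\gx^{9h}|J_\gs|^h$ hold for all $m\geq|\gs|$, so any weak-$\ast$ limit point along a subsequence would do, whereas in the paper the shift invariance is precisely what makes $\gn$ additive on cylinders); and your transfer step tacitly uses that a Banach limit of a sequence eventually bounded by given constants obeys the same bounds, which follows from positivity and shift invariance and should be stated when writing the argument out. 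What the paper's route buys is that the measure is defined directly on cylinders, making the bound immediate; what your route buys is avoiding the consistency check and the Kolmogorov extension at the modest cost of the Urysohn/Tietze approximation at the end.
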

\begin{proof}
Consider a sequence of probability measures $\set{\mu_m}_{m\geq 1}$ supported by $E$ such that for any $\go_0 \in \gO_m$,
\begin{equation} \label{eq123} \mu_m(J_{\go_0})=\frac{|J_{\go_0}|^h}{\sum_{\gs\in \gO_m} |J_\gs|^h}.\end{equation}
More precisely, we can construct $\mu_m$ as follows: First distribute the unit mass among the rank-$m$ basic intervals according to \eqref{eq123}. Inductively, suppose then that we have already distributed the mass of proportion $\mu_m (J_\gh)$ to basic intervals $J_\gh$, where $\gh\in \gO_n$ for some $n\geq m$. Then, redistribute the mass concentrated on $J_\gh$ to each of its $q$ subintervals $J_{\gh\ast j}$ with proportions $\frac{|J_{\gh\ast j}|^h}{\sum_{\ell=1}^q |J_{\gh\ast\ell}|^h}$, $1\leq j\leq q$, i.e.,
 \[\mu_m(J_{\gh\ast j})=\frac{|J_{\gh\ast j}|^h}{\sum_{\ell=1}^q |J_{\gh\ast\ell}|^h}\mu_m(J_\gh).\]
 Repeating the above procedure, we then get the measure $\mu_m$. Now, fix some $m\geq 1$, and take any $k\leq m$ and $\go_0 \in \gO_k$. We want to check the compatibility of the definition of $\mu_m$ on an arbitrary $J_{\go_0}$. From the disjointness of the sets $J_\go$, we obtain

\[\gm_m(J_{\go_0})=\sum_{\gt \in \gO_{m-k}}\mu_m(J_{\go_0\ast\gt}).\]
Then by \eqref{eq123}, we have
\begin{equation} \label{eq124} \sum_{\gs\in \gO_m}|J_\gs|^h\gm_m(J_{\go_0})=\sum_{\gt \in \gO_{m-k}}|J_{\go_0\ast\gt}|^h.\end{equation}
Take any $\gh\in \gO_k$. By Lemma~\ref{lemma1111} we have
\[|J_{\go_0\ast\gt}|\leq \gx^2\frac{|J_{\gh\ast\gt}| |J_{\go_0}|}{|J_\gh|}.\]
Hence by \eqref{eq124},
\[ |J_\gh|^h \sum_{\gs\in \gO_m}|J_\gs|^h\gm_m(J_{\go_0})\leq\gx^{2h} |J_{\go_0}|^h\sum_{\gt \in \gO_{m-k}}|J_{\gh\ast\gt}|^h,\]
which implies
\[ \sum_{\gh\in \gO_k} |J_\gh|^h \sum_{\gs\in \gO_m}|J_\gs|^h\gm_m(J_{\go_0})\leq\gx^{2h} |J_{\go_0}|^h\sum_{\gh\in \gO_k}\sum_{\gt \in \gO_{m-k}}|J_{\gh\ast\gt}|^h=\gx^{2h}|J_{\go_0}|^h\sum_{\gs \in \gO_m}|J_\gs|^h,\]
and thus,
\[\gm_m(J_{\go_0})\leq \gx^{2h}\frac{|J_{\go_0}|^h}{\sum_{\gh\in \gO_k} |J_\gh|^h}.\]
By an analogous discussion as above, we get \[\gx^{-2h}\frac{|J_{\go_0}|^h}{\sum_{\gh\in \gO_k} |J_\gh|^h}\leq \gm_m(J_{\go_0}),\]
and thus, \[\gx^{-2h}\frac{|J_{\go_0}|^h}{\sum_{\gs\in \gO_k} |J_\gs|^h}\leq \gm_m(J_{\go_0})\leq \gx^{2h}\frac{|J_{\go_0}|^h}{\sum_{\gs\in \gO_k} |J_\gs|^h}.\]
Combining it with Corollary~\ref{cor1}, we get $\gx^{-2h-3}|J_{\go_0}|^h\leq \gm_m(J_{\go_0})\leq \gx^{2h+3}|J_{\go_0}|^h$. Take $\gh=\max\set{1, \gx^{2h+3}}$. Then $\gh\geq 1$, and
\begin{equation} \label{eq125} \gh^{-1} |J_{\go_0}|^h\leq \mu_m(J_{\go_0}) \leq \gh |J_{\go_0}|^h.\end{equation}
In the above way, we construct a sequence of probability measures $\set{\mu_m}_{m\geq1}$ which are supported by $E$ and satisfy \eqref{eq125} for any $k\leq m$ and $\go_0 \in \gO_k$. Since all the measures $\mu_m$ are probability measures, we are able to extract a subsequence $\set{\mu_{m_n}}_{n=1}^\infty$ converging weakly to a limit measure $\mu_h$. To verify that $\mu_h$ fulfills the desired requirements we use Theorem~1.24 in \cite{M1}.
Fix some $k\geq 1$ and $\go_0 \in \gO_k$. Then, by the properties of weak convergence, $\limsup_{n\to\infty} \mu_{m_n}(J_{\go_0}) \leq \mu_h(J_{\go_0})$. Combining it with \eqref{eq125}, we get $\gh^{-1} |J_{\go_0}|^h\leq \mu_h(J_{\go_0})$. On the other hand, take any $\ep>0$ small enough so that the $\ep$-neighborhood $J(\ep)$ of $J_{\go_0}$ is separated from the other rank-$k$ basic intervals. Then, $\mu_{m_n}(J(\ep))= \mu_{m_n}(J_{\go_0})$ for all $n\geq 1$. Now, from the properties of weak convergence on open sets (see \cite[Theorem~1.24]{M1}), the following holds:
\[\liminf_{n\to\infty} \mu_{m_n}(J(\ep))\geq \mu_h (J(\ep))\geq \mu_h(J_{\go_0}).\]
Combining it with \eqref{eq125}, we get
$\mu_h(J_{\go_0})\leq \liminf_{n\to\infty} \mu_{m_n}(J_{\go_0})\leq \gh |J_{\go_0}|^h.$
Thus, we have
\begin{equation*} \gh^{-1} |J_{\go_0}|^h\leq \mu_h(J_{\go_0}) \leq \gh |J_{\go_0}|^h.\end{equation*}
Finally, for any $x\not \in E$, since $E$ is closed, there exists an open interval $U$ containing $x$ and separated from $E$ such that $\mu_h(U)\leq \liminf_{n\to \infty} \mu_{m_n}(U)=0$, which asserts that $\mu_h$ is supported by $E$. Hence, the proof of the proposition is complete.
\end{proof}

The following proposition is useful.

\begin{prop} \label{prop111}
Let $r>0$,  and let $\C U_r$ be the $r$-Moran covering of $E$. Then, there exists a positive integer $M$ such that the ball $B(x, r)$  of radius $r$, where $x\in J$, intersects at most $M$ elements of $\C U_r$.
\end{prop}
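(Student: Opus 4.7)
The plan is to exploit two elementary features of the $r$-Moran covering $\C U_r$: its members are pairwise disjoint subintervals of $J$, and their lengths are all of order $r$. Combined with the fact that a ball of radius $r$ in $\D R$ is an interval of length $2r$, these features force an absolute upper bound on the number of members of $\C U_r$ that can intersect $B(x,r)$.

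First, I would verify disjointness. For $J_\gs, J_\gt \in \C U_r$ with $\gs \neq \gt$: if $|\gs|=|\gt|$, net property $(ii)$ gives disjointness immediately. If $|\gs|<|\gt|$ and $\gs$ is not an initial segment of $\gt$, then $J_\gs$ and $J_{\gt|_{|\gs|}}$ are disjoint by $(ii)$, while $J_\gt \ci J_{\gt|_{|\gs|}}$ by net property $(i)$. The remaining case $\gs \prec \gt$ (so in particular $\gs \preceq \gt^-$) would force $J_{\gt^-}\ci J_\gs$ and hence $|J_{\gt^-}|\leq |J_\gs|\leq r$, contradicting the defining inequality $|J_{\gt^-}|>r$ of $\C U_r$.

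Next, I would bound the lengths from below. By definition $|J_\gs|\leq r$ for every $J_\gs \in \C U_r$, and applying the second assertion of Proposition~\ref{prop31} to $\gs^-$ yields $|J_\gs|\geq \gx^{-1}B^{-1}|J_{\gs^-}|>\gx^{-1}B^{-1}r$. Thus each member of $\C U_r$ has length strictly between $\gx^{-1}B^{-1}r$ and $r$.

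The counting step is then immediate. If $J_\gs\in \C U_r$ meets $B(x,r)$, pick any $y\in J_\gs \ii B(x,r)$; then for every $z\in J_\gs$ we have $|z-x|\leq |z-y|+|y-x|\leq |J_\gs|+r\leq 2r$, so $J_\gs \ci [x-2r,x+2r]$. Since the intersecting $J_\gs$'s are pairwise disjoint, all lie in this interval of length $4r$, and each has length at least $\gx^{-1}B^{-1}r$, there can be at most $4\gx B$ of them. Taking $M$ to be any integer exceeding $4\gx B$ completes the proof. I do not anticipate any serious obstacle here; the argument is entirely elementary once the net structure of the coding and the bounded distortion principle are in hand.
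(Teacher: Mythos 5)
Your proof is correct and follows essentially the same route as the paper: the lower bound $|J_\gs|>\gx^{-1}B^{-1}r$ from Proposition~\ref{prop31} together with a length comparison inside an interval of length comparable to $r$, yielding the bound $4\gx B$. The only difference is that you also verify the pairwise disjointness of the Moran covering elements from the net properties, which the paper simply asserts after the definition.
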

\begin{proof}
By Proposition~\ref{prop31}, for any $\gs\in \gO$, we get
$\left|J_\gs\right|\geq \gx^{-1} B^{-1}|J_{\gs^-}|$.
Let $\C U_r$ be the $r$-Moran covering of $E$. Fix any $x\in J$, and write $V=B(x, r)$. Define, \[Q_V=\set{J \in \C U_r : J\ii V \neq \es}.\]
Note that any interval $J_\gs$ in $Q_V$ contains a ball of radius $\frac 12 |J_\gs|$, and all such balls are disjoint.
Moreover, all the balls are contained in a ball of radius $2r$ concentric with $V$, and so comparing the volumes (in fact lengths),
\[2r\geq \left(\# Q_V\right)\frac 1  2 |J_\gs|\geq \left(\# Q_V\right) \frac 1 2\gx^{-1}  B^{-1}|J_{\gs^-}|>\left(\# Q_V\right)\frac 12 \gx^{-1}  B^{-1}r,\]
which implies $\#Q_V< 4 \gx B$. Hence, $M:=\lfloor 4 \gx B \rfloor$ fulfills the statement of the proposition.

\end{proof}

\begin{prop} \label{prop121}
Let $h \in (0, 1)$ be such that $P(h)=0$. Then,
\[0<\C H^h(E)<\infty \te{ and }\te{dim}_{\te{H}}(E)=h.\]
\end{prop}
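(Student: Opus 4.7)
The plan is to prove the two bounds on $\C H^h(E)$ separately, and then read off $\te{dim}_H(E)=h$ from them.

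For the upper bound, I would simply cover $E$ by the basic intervals of rank $n$. Since $E\ci \UU_{\gs\in \gO_n}J_\gs$ and $|J_\gs|\leq b^{-n}\to 0$ as $n\to\infty$, the family $\{J_\gs:\gs\in\gO_n\}$ is admissible for the definition of $\C H^h$ in the limit $n\to\infty$. By Corollary~\ref{cor1}, $\sum_{\gs\in\gO_n}|J_\gs|^h\leq \gx^{3h}$ for every $n\geq 1$, independent of $n$. Passing to the limit gives $\C H^h(E)\leq \gx^{3h}<\infty$, which is the easy half.

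For the positive lower bound, the natural tool is the mass distribution principle applied to the measure $\mu_h$ from Proposition~\ref{prop32}. Given an arbitrary set $U$ with $0<|U|=r$ and $U\ii E\neq \es$, pick $x\in U\ii E\ci J$, so that $U\ci B(x,r)$. The $r$-Moran covering $\C U_r$ of $E$ covers $U\ii E$, and by Proposition~\ref{prop111} the ball $B(x,r)$ meets at most $M=\lfloor 4\gx B\rfloor$ elements of $\C U_r$. Each such element $J_\gs\in\C U_r$ satisfies $|J_\gs|\leq r$, hence by Proposition~\ref{prop32}, $\mu_h(J_\gs)\leq \gh|J_\gs|^h\leq \gh r^h$. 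Summing,
\[\mu_h(U)\leq \sum_{J_\gs\in \C U_r,\, J_\gs\ii U\neq\es}\mu_h(J_\gs)\leq M\gh\,|U|^h.\]
The mass distribution principle then yields $\C H^h(E)\geq \mu_h(E)/(M\gh)=1/(M\gh)>0$.

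Combining the two bounds, $0<\C H^h(E)<\infty$, which forces $\te{dim}_H(E)=h$. The only genuinely delicate step is the lower bound: it hinges on controlling $\mu_h(U)$ by $|U|^h$ for arbitrary $U$, and the key trick is to replace the unstructured set $U$ by a bounded number of Moran pieces of roughly the same size, which is exactly what Propositions~\ref{prop32} and~\ref{prop111} together provide. Everything else is routine covering and mass distribution.
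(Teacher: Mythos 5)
Your proposal is correct and follows essentially the same route as the paper: the upper bound via the rank-$n$ covers and Corollary~\ref{cor1}, and the lower bound via $\mu_h$, the $r$-Moran covering, and the bounded-intersection count $M$ from Proposition~\ref{prop111}. The only cosmetic difference is that you conclude with the mass distribution principle applied to arbitrary sets $U$, whereas the paper bounds $\mu_h(B(x,r))/r^h$ and cites the density result (Proposition~2.2 of [F2]); these are equivalent formulations of the same estimate.
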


\begin{proof} For any $n\geq 1$, the set $\set{J_\gs : \gs \in \gO_n}$ is a covering of the $E$ and so by Corollary~\ref{cor1},
\[\C H^h(E) \leq \liminf_{n\to \infty} \sum_{\gs \in \gO_n}|J_\gs|^h\leq \gx^{3}<\infty,\]
which yields dim$_{\te{H}}(E)\leq h$.
Let $\mu_h$ be the Gibbs-like measure defined in Proposition~\ref{prop32}, and $k\geq 1$. Then, for any $\gs \in \gO_k$, we get  $\mu_h(J_\gs)\leq \gh |J_\gs|^h$. Let $r>0$ and let $\C U_r=\set{J_\go : |J_{\go}|\leq r<|J_{\go^{-}}|}$ be the $r$-Moran covering of $E$.  Then, by Proposition~\ref{prop111}, we get
\[\mu_h(B(x, r))\leq \sum_{J_{\go} \ii B(x, r) \neq \es}\mu_h(J_{\go}) \leq \gh \sum_{J_{\go} \ii B(x, r) \neq \es}|J_{\go}|^h\leq \gh M r^h.\]
Thus,
\[\limsup_{r\to 0} \frac{\mu_h(B(x, r))}{r^h}\leq \gh M,\]
and so by Proposition~\ref{prop0}, $\C H^h(E) \geq \gh^{-1} M^{-1}>0$, which implies that dim$_{\te{H}}(E) \geq h$.  Thus, the proposition is yielded.
\end{proof}

Let us now prove the following lemma.

\begin{lemma} \label{lemma122}
Let $h \in (0, 1)$ be such that $P(h)=0$. Then, $\ol{\te{dim}}_{\te{B}}(E)\leq h$.
\end{lemma}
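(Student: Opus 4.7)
The plan is to use the $r$-Moran covering $\C U_r$ itself as an efficient covering of $E$ by sets of diameter at most $r$, and to show that $\#\C U_r = O(r^{-h})$. The box-counting bound then follows immediately from the definition of $\ol{\te{dim}}_B$.

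First I would record a uniform lower bound on the diameters of members of $\C U_r$: for each $J_\gs \in \C U_r$ the bounded distortion principle (Proposition~\ref{prop31}) gives $|J_\gs| \geq \gx^{-1} B^{-1} |J_{\gs^-}| > \gx^{-1} B^{-1} r$. Hence every element of $\C U_r$ has diameter comparable to $r$, and in particular
\[\#\C U_r \cdot (\gx^{-1} B^{-1} r)^h \leq \sum_{J_\gs \in \C U_r}|J_\gs|^h.\]
So it suffices to bound the right-hand side by a constant independent of $r$.

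The main step, and the main obstacle, is that bound. Since $\C U_r$ is a finite antichain in $\gO$, let $n=\max\{|\gs|:\gs\in\C U_r\}$. For each $\gs\in\C U_r$ and each $\gt \in \gO_{n-|\gs|}$, Lemma~\ref{lemma33} yields $|J_\gs|^h|J_\gt|^h \leq \gx^{3h}|J_{\gs\gt}|^h$, and Corollary~\ref{cor1} applied to the level $n-|\gs|$ gives $\sum_{\gt\in\gO_{n-|\gs|}}|J_\gt|^h \geq \gx^{-3h}$. Multiplying the first inequality through by $|J_\gs|^h$, summing over $\gt\in\gO_{n-|\gs|}$, then summing over $\gs\in\C U_r$, and using that the resulting words $\gs\gt$ are distinct elements of $\gO_n$ (since $\C U_r$ is an antichain), I obtain
\[\gx^{-3h}\sum_{\gs\in\C U_r}|J_\gs|^h \leq \sum_{\gs\in\C U_r}|J_\gs|^h \!\!\sum_{\gt\in\gO_{n-|\gs|}}\!\!|J_\gt|^h \leq \gx^{3h}\sum_{\go\in\gO_n}|J_\go|^h \leq \gx^{6h},\]
where the final inequality uses Corollary~\ref{cor1} again. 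Hence $\sum_{\gs\in\C U_r}|J_\gs|^h \leq \gx^{9h}$.

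Combining the two displays gives $\#\C U_r \leq \gx^{10h}B^h\, r^{-h}$. Since $\C U_r$ covers $E$ by sets of diameter $\leq r$, writing $N(E,r)$ for the minimum number of such sets needed to cover $E$, we have $N(E,r)\leq \#\C U_r \leq C\, r^{-h}$ for the constant $C=\gx^{10h}B^h$. Therefore
\[\ol{\te{dim}}_B(E)=\limsup_{r\to 0}\frac{\log N(E,r)}{-\log r} \leq \limsup_{r\to 0}\frac{\log C + h\log(1/r)}{-\log r}=h,\]
which is the desired bound. The only delicate point is the uniform estimate $\sum_{\gs\in\C U_r}|J_\gs|^h=O(1)$; the diameter lower bound on Moran elements and the conversion $N(E,r)\leq \#\C U_r$ are routine.
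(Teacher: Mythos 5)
Your proof is correct, but it obtains the crucial estimate by a genuinely different route than the paper. Both arguments share the same skeleton: cover $E$ by the $r$-Moran family $\C U_r$, use $|J_\gs|\geq \gx^{-1}B^{-1}|J_{\gs^-}|>\gx^{-1}B^{-1}r$ to convert a uniform bound on $\sum_{J_\gs\in\C U_r}|J_\gs|^h$ into $N_r(E)=O(r^{-h})$, and conclude from the definition of $\ol{\te{dim}}_B$. The difference is the sum bound itself. The paper gets it in one line from the Banach-limit measure of Proposition~\ref{prop32}: since $\mu_h(J_\gs)\geq\gh^{-1}|J_\gs|^h$ and the Moran intervals are disjoint, $\sum_{J_\gs\in\C U_r}|J_\gs|^h\leq\gh\sum_{J_\gs\in\C U_r}\mu_h(J_\gs)\leq\gh$. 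You instead treat $\C U_r$ as a finite antichain and pad each $\gs\in\C U_r$ to a common length $n$ by words $\gt\in\gO_{n-|\gs|}$, combining Lemma~\ref{lemma33}, Corollary~\ref{cor1} at levels $n-|\gs|$ and $n$, and the injectivity of $(\gs,\gt)\mapsto\gs\gt$ to get $\sum_{J_\gs\in\C U_r}|J_\gs|^h\leq\gx^{9h}$; that chain of inequalities is valid. What your route buys is independence of Proposition~\ref{prop32} (no Banach limit, no Kolmogorov extension) for this lemma, so the box-dimension upper bound rests only on the pressure estimates; the paper's route is shorter here mainly because $\mu_h$ has to be built anyway for the lower bound on $\C H^h(E)$ and the upper bound on $\C P^h(E)$. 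Two small repairs, neither a real gap: the phrase ``multiplying the first inequality through by $|J_\gs|^h$'' should refer to the Corollary~\ref{cor1} inequality $\sum_{\gt\in\gO_{n-|\gs|}}|J_\gt|^h\geq\gx^{-3h}$ (your displayed chain already does this correctly); and when $|\gs|=n$ the padding word is empty, a case outside the statement of Corollary~\ref{cor1} ($n\geq 1$), so you should note that then $\sum_{\gt\in\gO_0}|J_\gt|^h=|J|^h=1\geq\gx^{-3h}$ holds trivially.
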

\begin{proof}
Let $\mu_h$ be the Gibbs-like measure defined in Proposition~\ref{prop32}, and for $r>0$ let $\C U_r=\set{J_\go : |J_\go|\leq r<|J_{\go^{-}}|}$ be the $r$-Moran covering of $E$. Then, for any $J_\gs \in \C U_r$, we get $\mu_h(J_\gs)\geq \gh^{-1} |J_\gs|^h$. Thus, it follows that
\[\|\C U_r\|^h =\sum_{J_\gs \in \C U_r} |J_\gs|^h \leq \gh \sum_{J_\gs \in \C U_r}\mu_h(J_\gs)= \gh.\]
Again, for any $J_\gs \in \C U_r$ by Proposition~\ref{prop31}, it follows that $|J_\gs|\geq \gx^{-1}B^{-1} |J_{\gs^-}| >\gx^{-1}B^{-1}r$. Hence,
$\left(\gx B\right)^{-h}r^h N_r(E) \leq \|\C U_r\|^h\leq \gh$, where $N_r(E)$ is the smallest number of sets of diameter at most $r$ that can cover $E$, which implies
 $N_r(E)\leq \gh \left(\gx B\right)^h r^{-h}$ and so
 \[\log N_r(E) \leq \log \left[\gh \left(\gx B\right)^h\right] -h\log r,\]
  which yields
\[\ol{\te{dim}}_{\te{B}} E=\limsup_{r\to 0} \frac{\log N_r(E)}{-\log r} \leq  h,\]
and thus, the lemma is obtained.
\end{proof}

Let us now prove the following proposition.
\begin{prop} \label{prop123}
Let $h \in (0, 1)$ be such that $P(h)=0$, and then $\C P^h(E)<\infty$.
\end{prop}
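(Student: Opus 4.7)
The plan is the packing-measure dual of what was just done for the Hausdorff measure in Proposition~\ref{prop121}: if we can show that the measure $\mu_h$ from Proposition~\ref{prop32} is \emph{lower} $h$-regular on $E$ in the sense $\mu_h(B(x,r))\geq c\,r^h$ for some constant $c>0$ and all sufficiently small $r$, then the standard density criterion (see Falconer \cite{F2}, the packing-measure counterpart of his Proposition~2.2) immediately gives $\C P^h(E)\leq c^{-1}\mu_h(E)=c^{-1}<\infty$.

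The heart of the argument is thus producing a basic interval trapped inside each ball. Fix $x\in E$; then $x=\pi(\go)$ for some $\go\in\gO_\infty$, and the compact intervals $\set{J_{\go|_n}}_{n\geq 1}$ form a descending chain through $x$ whose diameters tend to $0$. For a small $r>0$ let $n=n(x,r)$ be the least integer with $|J_{\go|_n}|\leq r$. Then $|J_{\go|_{n-1}}|>r$, and by the second clause of Proposition~\ref{prop31},
\[
|J_{\go|_n}|\geq \gx^{-1}B^{-1}|J_{\go|_{n-1}}|>\gx^{-1}B^{-1}r.
\]
Because $x\in J_{\go|_n}$ and $|J_{\go|_n}|\leq r$, the interval $J_{\go|_n}$ is contained in the ball $B(x,r)$. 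Now invoke the lower estimate in Proposition~\ref{prop32} to conclude
\[
\mu_h(B(x,r))\geq \mu_h(J_{\go|_n})\geq \gh^{-1}|J_{\go|_n}|^h\geq \gh^{-1}(\gx B)^{-h}\,r^h.
\]
So the required density bound holds with $c=\gh^{-1}(\gx B)^{-h}$, uniformly in $x\in E$ and small $r$.

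With this in hand the rest is routine: by the standard relation between lower $h$-density of a Borel measure and packing measure (Falconer \cite{F2}), the estimate $\limsup_{r\to 0}\mu_h(B(x,r))/r^h\geq c$ for every $x\in E$ implies $\C P^h(E)\leq c^{-1}\mu_h(E)=c^{-1}=\gh(\gx B)^{h}<\infty$, which is the claim.

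The only mildly delicate point is the density-to-packing-measure step, but this is a well-known general principle and the paper already uses its Hausdorff analogue in the proof of Proposition~\ref{prop121}; no new estimate on the dynamics is needed. Everything else is just the combination of the bounded-distortion Proposition~\ref{prop31} (to keep $|J_{\go|_n}|$ comparable to $r$) with the $\mu_h$-mass estimate from Proposition~\ref{prop32}.
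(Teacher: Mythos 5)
Your proof is correct and takes essentially the same route as the paper: the interval $J_{\go|_n}$ you trap inside $B(x,r)$ is precisely the element of the $r$-Moran covering containing $x$, and the paper likewise combines the estimate $|J_\gs|\geq \gx^{-1}B^{-1}|J_{\gs^-}|$ from Proposition~\ref{prop31} with the lower bound of Proposition~\ref{prop32} and Falconer's density criterion to get $\C P^h(E)\leq 2^h\gh\gx^h B^h<\infty$. One small slip: the packing-measure criterion requires the lower density ($\liminf$), not $\limsup$, and carries a factor $2^h$; since you actually established $\mu_h(B(x,r))\geq \gh^{-1}(\gx B)^{-h}r^h$ for all sufficiently small $r$, the $\liminf$ bound holds and your argument is unaffected.
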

\begin{proof}
For the Gibbs-like measure $\mu_h$, by Proposition~\ref{prop32}, there exists a constant $\gh\geq 1$ such that $\mu_h(J_\gs)\geq \gh^{-1} |J_\gs|^h$ for $\gs \in \gO_k$, $k\geq 1$. Again, Proposition~\ref{prop31} gives that
\[\left|J_{\gs}\right| \geq \gx^{-1} B^{-1}|J_{\gs^-}|.\] Let
$\C U_r=\set{J_\go : |J_\go|\leq r<|J_{\go^{-}}|}$ be the $r$-Moran covering of $E$ for some $r>0$. Let $x\in J_\gs$ for some $J_\gs \in \C U_r$, and then $J_\gs \sci B(x, r)$. Therefore,
\[\mu_h(B(x, r))\geq \mu_h(J_\gs)\geq \gh^{-1} |J_\gs|^h>\gh^{-1}\gx^{-h} B^{-h} r^h,\]
which implies
\[\liminf_{r\to 0} \frac{\mu_h(B(x, r))}{r^h}\geq \gh^{-1}\gx^{-h} B^{-h},\] and so by Proposition~\ref{prop0},
\[\C P^h(E)\leq 2^h \gh\gx^{h} B^{h}<\infty,\]
and thus, the proposition is obtained.
\end{proof}
\subsection*{Proof of Theorem~\ref{theorem}}
Proposition~\ref{prop121} tells us that dim$_{\te{H}}(E)=h$, and Lemma~\ref{lemma122} gives that
$\ol{\te{dim}}_{\te{B}} E\leq h$. Combining these with the inequalities in \eqref{eq0}, we have
\[\te{dim}_{\te{H}}(E)=\te{dim}_{\te{P}}(E)=\ul{\te{dim}}_{\te{B}}(E)=\ol{\te{dim}}_{\te{B}}(E)=h.\]
Again, from Proposition~\ref{prop121} and Proposition~\ref{prop123} it follows that
\[0<\C H^h(E)\leq \C P^h(E)<\infty.\]
Thus, the proof of the theorem is complete.

\end{document}